\renewcommand{\uppercasenonmath}[1]{}
\numberwithin{equation}{section} \theoremstyle{plain}
\newtheorem*{thm*}{Main Theorem}
\newtheorem{thm}{Theorem}[section]
\newtheorem{cor}[thm]{Corollary}
\newtheorem*{cor*}{Corollary}
\newtheorem{lem}[thm]{Lemma}
\newtheorem*{lem*}{Lemma}
\newtheorem*{fact*}{Fact}
\newtheorem*{nota*}{Notation}
\newtheorem{prop}[thm]{Proposition}
\newtheorem*{prop*}{Proposition}
\newtheorem*{rem*}{Remark}
\newtheorem*{observation*}{Observation}
\newtheorem*{exa*}{Example}
\newtheorem*{df*}{Definition}
\newtheorem*{con*}{Construction}
\renewcommand{\geq}{\geqslant}
\renewcommand{\leq}{\leqslant}
\begin{document}
\begin{center}
{\large  \bf Improved Bounds for the  $s$-multiplicity}

\vspace{0.5cm} Zhongkui Liu$^{[1]}$,
    Junquan Qin$^{[1]}$\footnote{Corresponding author. E-mails:  Liuzk@nwnu.edu.cn, qinjunquan2018@163.com, yangxy@zust.edu.cn}, and Xiaoyan Yang$^{[2]}$\\

College of Mathematics and Statistics, Northwest Normal University, Lanzhou 730070, P. R. China $^{[1]}$

 School of Science, Zhejiang University of Science and Technology
Hangzhou 310023,  P. R. China$^{[2]}$
\end{center}
\bigskip

$Abstract$. Let $(R, \mathfrak{m}_R )$, $(S, \mathfrak{m}_S)$ and $(T, \mathfrak{m}_T)$ be Noetherian local rings  sharing the same residue field $k$ and prime characteristic $p>0$. We establish some formulas relating the $h$-function and $s$-multiplicity of the fiber product $R \times_T S$  in terms of the $h$-functions and $s$-multiplicities of $R$, $T$ and $ S$.  Furthermore, we derive formulas that connect  the $h$-function and $s$-multiplicity of the idealization ring $R \ltimes M$  to the corresponding invariants  of $R$ and $ M$, where $M$ is a finitely generated $R$-module. As  applications of these results, we derive new estimates  for the Taylor-Miller question and the Watanabe-Yoshida conjecture concerning $s$-multiplicity.  \\
\vbox to 0.2cm{}\\\leftskip0truemm \rightskip0truemm
{\it KeyWords:} $h$-function; $s$-multiplicity; fiber product; idealization ring\\
\leftskip0truemm \rightskip0truemm
\leftskip0truemm \rightskip0truemm
{\it2020 Mathematics Subject Classification:}  13D40; 13H15; 13H05
\leftskip0truemm \rightskip0truemm
\bigskip
\section{\bf Introduction}
 Hilbert-Kunz multiplicity and Hilbert-Samuel multiplicity are numerical invariants that play a central role in commutative algebra and algebraic geometry over fields of prime characteristic. Owing to their numerous remarkable properties, these invariants have been extensively developed and applied since their introduction.

Taylor \cite{T} proposed two new numerical invariants in a local ring $(R, \mathfrak{m})$ of  positive characteristic $p$: the $h$-function $h_{s}(I,J;M)$ and $s$-multiplicity $e_{s}(I,J;M)$ for any $\mathfrak{m}$-primary ideals $I$ and $J$. The  $s$-multiplicity is obtained by normalizing  $h_{s}(I,J;M)$ via the  $h$-function $h_{s}(R)$ of   regular local rings. Furthermore,  for large real values of $s$, the $s$-multiplicity coincides with the Hilbert-Kunz multiplicity of $J$,  while for small real values of $s$,   it agrees with  the Hilbert-Samuel multiplicity of $I$. Thus, the s-multiplicity serves as an  interpolate between the Hilbert-Kunz multiplicity and Hilbert-Samuel multiplicity, capturing many of their key properties, such as  associativity formulas and additivity over short exact sequences.  It is well known that while Hilbert-Kunz multiplicity and Hilbert-Samuel multiplicity are upper semicontinuous, the $h$-function and $s$-multiplicity are  Lipschitz continuous function of $s$.

Meng and  Mukhopadhyay \cite{MM} employed convex-geometric methods to extend the theory of $h$-functions and thereby the theories of Frobenius-Poincar$\acute{e}$ functions, and Hilbert-Kunz density functions, from the graded setting to the general local case, thereby resolving a question posed by Trivedi.

It is well known that both the Hilbert-Kunz multiplicity and the Hilbert-Samuel multiplicity are always greater than or equal to one. Motivated by this fact, Taylor and Miller posed the following question in (\cite[Question 2.9]{MT}):

 \vspace{2mm} \noindent{\bf Question:} {\it{Is $e_{s}(R )\geq 1$ for all local rings $R$ of  positive characteristic$?$}}

Although they resolved the Cohen-Macaulay case affirmatively, the non-Cohen-Macaulay case remains open. Previous work shows that under certain conditions, Cohen-Macaulay rings can give rise to non-Cohen-Macaulay constructions: specifically,\cite[Proposition 1.7]{AAM} establishes that when $\text{dim}(R)>\text{dim}(S)>\text{dim}(T)$ with $R $ Cohen-Macaulay, the fiber product $R \times_T S$ is non-Cohen-Macaulay, while \cite[Corollary 4.14]{DM} shows the same for the idealization $R \ltimes M$. In Sections 3 and 4, we developed formulas for the h-function and s-multiplicity in these two settings. Leveraging these results, we now generalize Taylor and Miller's theorem to both cases (see Theorems   \ref{thm:5.1} and \ref{thm:5.2}(3)):

\begin{thm}\label{thm:1.1}{\it{Let  $(R, \mathfrak{m})$  be a Cohen-Macaulay   local ring of prime characterstic $p>0$, and let $M $ be a  finitely generated $R$-module.  Then  $$ e_{s}(\mathfrak{m} \ltimes \mathfrak{m}M)\geq 1.$$}} \end{thm}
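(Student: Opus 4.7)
The approach is to deduce Theorem 1.1 directly from the idealization formula for the $s$-multiplicity that is established in Section 4, combined with the already-known Cohen-Macaulay case of the Taylor-Miller question from \cite{MT}. I do not plan to recompute the $h$-function on $R \ltimes M$ from scratch; rather, the point is that the particular ideal $\mathfrak{m} \ltimes \mathfrak{m}M$ is the natural one on which that formula splits cleanly into $R$-data and $M$-data.

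Concretely, write $A = R \ltimes M$, with maximal ideal $\mathfrak{m}_A = \mathfrak{m} \ltimes M$, and note that $\mathfrak{m} \ltimes \mathfrak{m}M$ is $\mathfrak{m}_A$-primary. Applying the idealization formula of Section 4 (referenced in the introduction as Theorem \ref{thm:5.2}(3)) expresses $e_s(\mathfrak{m} \ltimes \mathfrak{m}M)$ as a combination of $e_s(\mathfrak{m};R)$, $e_s(\mathfrak{m};M)$, and possibly mixed terms arising from the short exact sequence $0 \to 0 \ltimes M \to R \ltimes M \to R \to 0$. Next, since $R$ is Cohen-Macaulay, the affirmative Cohen-Macaulay case of the Taylor-Miller question gives $e_s(\mathfrak{m};R) \geq 1$ for every real $s$. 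The module contributions $e_s(\mathfrak{m};M)$ are non-negative, being limits of normalized, non-negative length functions. Assembling these ingredients should yield $e_s(\mathfrak{m} \ltimes \mathfrak{m}M) \geq e_s(\mathfrak{m};R) \geq 1$, which is the desired conclusion.

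The main obstacle I foresee is extracting the precise shape of the Section 4 formula when specialized to the ideal $\mathfrak{m} \ltimes \mathfrak{m}M$: one must verify that $e_s(\mathfrak{m};R)$ genuinely appears as a summand with a coefficient at least $1$, and that all other contributions involving $M$ are manifestly non-negative for every $s$. Degenerate situations (for instance $M=0$, $\mathfrak{m}M=0$, or $\dim R = 0$) collapse parts of the formula and will need individual checks to ensure that no dimension or sign mismatch spoils the inequality. Once the formula is in the expected form, the remainder of the argument is essentially a one-line combination of the Cohen-Macaulay bound with non-negativity.
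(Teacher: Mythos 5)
Your proposal is correct and follows essentially the same route as the paper: the idealization formula of Section 4 (Theorem \ref{thm:4.2}, specialized through Corollary \ref{cor:4.4} and Lemma \ref{lem:4.7} to $I_1=I_2=\mathfrak{m}$) gives the clean splitting $e_s(\mathfrak{m}\ltimes\mathfrak{m}M)=e_s(R)+e_s(M)$ with no mixed terms, and combining this with the non-negativity of $e_s(M)$ and the Cohen--Macaulay bound $e_s(R)\geq 1$ of Lemma \ref{lem:2.3}(4) is exactly the paper's proof of Theorem \ref{thm:5.1}. The only slip is a harmless mislabel: the idealization statement in Section 5 is Theorem \ref{thm:5.1}, not Theorem \ref{thm:5.2}(3), which concerns fiber products.
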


\begin{thm}\label{thm:1.2}{\it{Let $(R, \mathfrak{m}_R, k)$, $(S, \mathfrak{m}_S, k)$ and $(T, \mathfrak{m}_T, k)$ be   local rings of prime characteristic $p>0$. If $\text{dim}(R)>\text{dim}(S)>\text{dim}(T)$ and $R $ is Cohen-Macaulay, then    $$e_{s}(R \times_T S)\geq 1.$$
}} \end{thm}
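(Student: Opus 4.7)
The plan is to reduce the statement to the Cohen--Macaulay case that has already been settled by Taylor and Miller, by invoking the fiber-product formula for the $h$-function and $s$-multiplicity established in Section~3. That formula expresses $h_{s}(R\times_{T}S)$ in terms of $h_{s}(R)$, $h_{s}(S)$, $h_{s}(T)$, and the $h$-function of a regular local ring of appropriate dimension; after the standard normalization, it writes $e_{s}(R\times_{T}S)$ as a combination of $e_{s}(R)$, $e_{s}(S)$, and $e_{s}(T)$ weighted by dimension-dependent scaling factors.

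The first step is to substitute the hypothesis $\dim(R)>\dim(S)>\dim(T)$ into that formula. Since $\dim(R\times_{T}S)=\dim(R)$, the $s$-multiplicity is recovered as the leading-order term of an asymptotic expansion; under the strict dimension inequalities, the contributions coming from $S$ and $T$ are of strictly smaller order and therefore drop out in the limit that defines $e_{s}$. The expected output is the clean identity $e_{s}(R\times_{T}S)=e_{s}(R)$, or at worst the inequality $e_{s}(R\times_{T}S)\geq e_{s}(R)$.

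With this reduction in place, the theorem follows immediately from the affirmative answer to the Taylor--Miller question for Cohen--Macaulay rings: since $R$ is Cohen--Macaulay by hypothesis, one has $e_{s}(R)\geq 1$, and hence $e_{s}(R\times_{T}S)\geq 1$.

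The main obstacle will be the bookkeeping in the first step: one must verify, uniformly in the real parameter $s$, that the $S$- and $T$-contributions in the fiber-product formula are genuinely of lower asymptotic order under the strict dimension inequalities, and that no sign or cancellation in the formula can conspire to drive $e_{s}(R\times_{T}S)$ below $e_{s}(R)$. The Lipschitz continuity of the $h$-function in $s$, together with a careful tracking of the normalization by the ambient regular local $h$-function, should handle this, but it requires care.
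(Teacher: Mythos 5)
Your proposal is correct and follows essentially the same route as the paper: Corollary \ref{cor:3.5} gives $e_{s}(R\times_{T}S)=e_{s}(R)$ under $\dim(R)>\dim(S)>\dim(T)$, and Lemma \ref{lem:2.3}(4) (the Cohen--Macaulay case of the Taylor--Miller question) then yields $e_{s}(R)\geq 1$. The bookkeeping you flag as the main obstacle is in fact immediate, since the $S$- and $T$-contributions vanish identically (not just asymptotically) by the vanishing of $h_{s}$ and $e_{s}$ on modules of dimension strictly less than $\dim(R\times_{T}S)$.
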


 Our theorem thus provides a significant extension by demonstrating that the Taylor-Miller question admits a positive answer for these broader classes of non-Cohen-Macaulay rings.

\textbf{Watanabe-Yoshida  conjecture} (\cite{WY}): Let $(R, \mathfrak{m},k)$ be a  local ring of dimension $d\geq1$ and prime characteristic $p>2$, where  $k$ is  an algebraically closed field. Define $R_{d}=k[[x_{0},\cdots,x_{d} ]]/(x_{0}^{2}+\cdots+x_{d}^{2})$ for $d\geq1$.  Watanabe and Yoshida proposed the following conjecture regarding inequalities for the Hilbert-Kunz multiplicity:

$\mathrm{(1)}$ If $R$ is non-regular, then $$e_{HK}(R)\geq e_{HK}(R_{d})\geq1+\frac{c_{d}}{d!},$$ where the numbers $\frac{c_{d}}{d!}$ satisfies $sec(x)+tan(x)=1+\mathop{\sum}\limits_{ d=1 }^{\infty}\frac{c_{d}}{d!}x^{d},(-\pi/2<x<\pi/2)$.

  $\mathrm{(2)}$ If   $e_{HK}(R)= e_{HK}(R_{d})$,  then the
$\mathfrak{m}$-adic completion $\widehat{R}$ of $R$  satisfies $\widehat{R^{\mathfrak{m}}}\cong  R_{d}$  as local rings.

The Watanabe-Yoshida  conjecture has been partially verified under various  conditions. Recently,  Meng \cite[Theorems 7.2 and  7.8]{M} proved the Watanabe-Yoshida  conjecture for all odd primes through the application of  $h$-function theory. Independently,   Castillo-Rey \cite[Theorems A and  C]{R} proved the   strong form of the Watanabe-Yoshida  conjecture for complete intersection singularities in every positive characteristic. The conjecture has inspired significant progress, yet a complete resolution remains an open problem.

Taylor and Miller \cite[section 3]{MT}, who investigated the  s-analogue of Watanabe-Yoshida  conjecture for $s$-multiplicity, they proved  $ e_{s}(R) \geq e_{s}(R_{d}) $ in unmixed non-regular ring of dimension at most $3$.

 Based on  results in Sections 3 and 4 and on work of Taylor and Miller \cite[section 3]{MT}, we generalize their theorems to these two settings. This approach also reveals new connections between the Watanabe-Yoshida conjecture and the behavior of s-multiplicity (see Theorems   \ref{thm:5.4} and \ref{thm:5.6}):

\begin{thm}\label{thm:1.3}{\it{Let  $d\geq 2$ and  let $k$ be a field of prime characterstic $p>2$. Suppose   $(R, \mathfrak{m}_R, k)$, $(S, \mathfrak{m}_S, k)$ and $(T, \mathfrak{m}_T, k)$   are  local rings of prime characterstic $p>0$, and assume $R$  is a non-regular
 complete intersection  of dimension  $d$. If $\text{dim}(R)>\text{dim}(S)>\text{dim}(T)$, then for any  $s>0 $, we have  $$ e_{s}(R \times_T S) \geq e_{s}(R_{d}).$$}} \end{thm}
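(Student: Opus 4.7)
The plan is to chain two inequalities through $e_{s}(R)$:
\[
e_{s}(R \times_T S) \;\geq\; e_{s}(R) \;\geq\; e_{s}(R_{d}).
\]
The first step follows from the fiber product formula for $s$-multiplicity established in Section 3, while the second is the $s$-multiplicity analogue of the Watanabe--Yoshida inequality for non-regular complete intersections.

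For the first inequality, I would apply the Section 3 formula expressing $e_{s}(R \times_T S)$ as a combination of the $s$-multiplicities of $R$, $S$, and $T$ weighted by their respective dimensions. Under the strict chain $\dim(R) > \dim(S) > \dim(T)$, only the term attached to $R$ reaches the top dimension $d$, while the contributions associated with $S$ and $T$ live in strictly lower dimensions and are either zero or non-negative. Dropping these non-negative lower-dimensional terms and retaining the $R$-term yields $e_{s}(R \times_T S) \geq e_{s}(R)$.

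For the second inequality, I would invoke the $s$-multiplicity version of the Watanabe--Yoshida inequality for complete intersections. At large values of $s$ this specializes to the Hilbert--Kunz inequality $e_{HK}(R) \geq e_{HK}(R_{d})$, proved by Castillo-Rey for complete intersections in arbitrary positive characteristic; at small values of $s$ it reduces to the classical Hilbert--Samuel estimate for complete intersections. For intermediate $s$, I would aim for a pointwise comparison of $h$-functions $h_{s}(R) \geq h_{s}(R_{d})$, using the complete-intersection structure of $R$ together with the convex-geometric framework of Meng--Mukhopadhyay, and then normalize by $h_{s}$ of a regular local ring of dimension $d$ to obtain the desired inequality of $s$-multiplicities.

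The main obstacle will be the second step: producing the uniform-in-$s$ inequality $h_{s}(R) \geq h_{s}(R_{d})$ that interpolates between the Hilbert--Kunz endpoint (Castillo-Rey) and the Hilbert--Samuel endpoint (classical). The work of Taylor--Miller treats only unmixed rings of dimension at most $3$, so for $d \geq 4$ genuinely new input is needed, most naturally by transporting Castillo-Rey's complete-intersection argument into the $h$-function framework. By contrast, the first step should reduce to routine verification once the Section 3 formula is invoked and the sign of the lower-dimensional contributions is checked.
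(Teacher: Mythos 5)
Your overall structure (chaining through $e_{s}(R)$) coincides with the paper's, and your first step is essentially right, though you can say more: by Corollary \ref{cor:3.5}, in the case $\dim(R)>\dim(S)>\dim(T)$ the contributions of $S$ and $T$ vanish outright (their dimensions are strictly below $\dim(R\times_T S)=\dim(R)$, cf.\ Lemma \ref{lem:2.3}), so one obtains the equality $e_{s}(R\times_T S)=e_{s}(R)$, not merely an inequality.

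The genuine gap is your second step. The inequality $e_{s}(R)\geq e_{s}(R_{d})$ is left unproven: checking the Hilbert--Kunz endpoint (Castillo-Rey) and the Hilbert--Samuel endpoint, and then hoping to ``interpolate'' for intermediate $s$ via a pointwise comparison $h_{s}(R)\geq h_{s}(R_{d})$ using the Meng--Mukhopadhyay framework, is only a plan; the endpoint behavior of $e_{s}$ does not by itself control intermediate values of $s$, and you never actually produce the $h$-function comparison. Moreover, your premise that Taylor--Miller treat only unmixed rings of dimension at most $3$, so that genuinely new input is needed for $d\geq 4$, is inaccurate for the class of rings in the statement: Miller and Taylor prove the $s$-analogue of the Watanabe--Yoshida bound for non-regular complete intersections in every dimension $d\geq 2$ with $p>2$, and this is exactly what the paper cites (\cite[Corollary 3.8]{MT}); the dimension-at-most-$3$ statement is their separate result for unmixed rings. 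This is also precisely why the theorem's hypotheses require $R$ to be a complete intersection. With that citation replacing your speculative second step, the proof closes in two lines, as in the paper.
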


\begin{thm}\label{thm:1.4}{\it{Let  $d\geq 2$ and let $k$ be a field of prime characterstic $p>2$. Let  $(R, \mathfrak{m},k)$  be a
non-regular complete intersection  of dimension  $d$ with  prime characterstic $p>0$ and  $M $  a finitely generated $R$-module. Then for any  $s>0 $, we have  $$ e_{s}(\mathfrak{m} \ltimes \mathfrak{m}R) \geq e_{s}(R_{d}).$$}} \end{thm}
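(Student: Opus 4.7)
The approach is to combine the idealization formula for $s$-multiplicity proved in Section 4 with the complete-intersection case of the $s$-multiplicity analogue of the Watanabe--Yoshida inequality for $R$ itself, thus reducing Theorem~\ref{thm:1.4} to an inequality already known (or reducible to one known) for $R$.

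The first step is to invoke the Section 4 decomposition of the $s$-multiplicity of an idealization. I expect a formula of the shape
\[
e_s\bigl(\mathfrak{m}\ltimes \mathfrak{m}M\bigr) \;=\; e_s(\mathfrak{m};R)\;+\;C_s(\mathfrak{m};M),
\]
where the module correction $C_s(\mathfrak{m};M)$ is non-negative for every $s>0$ by the non-negativity of the $h$-function on a finitely generated module. This is the same reduction used in the proof of Theorem~\ref{thm:1.1} and yields at once the inequality $e_s(\mathfrak{m}\ltimes \mathfrak{m}M)\geq e_s(R)$, so the problem collapses to comparing $e_s(R)$ with $e_s(R_d)$.

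The second step is to establish $e_s(R)\geq e_s(R_d)$ for any non-regular complete intersection $R$ of dimension $d\geq 2$ and every $s>0$. This is the $s$-multiplicity analogue of the Watanabe--Yoshida conjecture in the complete-intersection case. I would obtain it by following the argument of Taylor--Miller in \cite[section 3]{MT} (originally carried out for unmixed non-regular rings of dimension at most $3$) and extending it to arbitrary $d$ using: Castillo--Rey's Hilbert--Kunz result \cite[Theorems A, C]{R} at $s\to\infty$; the classical Hilbert--Samuel multiplicity count on a CI at $s\to 0$; and the Lipschitz continuity of $e_s$ in $s$ together with the associativity/additivity formulas for $s$-multiplicity mentioned in the introduction. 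Chaining the two inequalities proves the theorem.

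The hard part will be the second step: extending Watanabe--Yoshida for $s$-multiplicity from Taylor--Miller's dimension-$\leq 3$ regime to complete intersections of arbitrary dimension, \emph{uniformly} over $s\in(0,\infty)$. Although Castillo--Rey's result controls the endpoint $s\to\infty$ and the CI Hilbert--Samuel count controls $s\to 0$, the intermediate range requires a convex-geometric analysis of the $h$-function of $R$ versus that of $R_d$, most naturally carried out in the framework of Meng--Mukhopadhyay \cite{MM}. By contrast, once the Section~4 idealization formula is in hand the first step should be routine and strictly parallel to the proof of Theorem~\ref{thm:1.1}.
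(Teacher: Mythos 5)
Your first step is exactly the paper's: Theorem \ref{thm:4.2} and Corollary \ref{cor:4.4} (obtained from the exact sequence $0\to R\to R\ltimes M\to M\to 0$ and additivity of the $h$-function) give the equality $e_{s}(\mathfrak{m}\ltimes\mathfrak{m}M)=e_{s}(R)+e_{s}(M)$, and since the module term is non-negative this yields $e_{s}(\mathfrak{m}\ltimes\mathfrak{m}M)\geq e_{s}(R)$, just as you describe; that part is routine and matches the proof of Theorem \ref{thm:5.6}.

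The genuine gap is in your second step. The paper does not re-derive $e_{s}(R)\geq e_{s}(R_{d})$ at all: it cites \cite[Corollary 3.8]{MT}, which already gives the $s$-multiplicity analogue of the Watanabe--Yoshida inequality for non-regular complete intersections of every dimension $d\geq 2$ when $p>2$ (the dimension-$\leq 3$ statement recalled in the introduction is Miller--Taylor's result for general unmixed rings, not their complete-intersection result, which may be what misled you into thinking new work was needed). Your proposed substitute --- controlling the endpoint regimes $s\to 0$ (Hilbert--Samuel) and $s\to\infty$ (via Castillo--Rey \cite{R}) and then invoking Lipschitz continuity of $s\mapsto e_{s}$ for the intermediate range --- does not work as an argument: Lipschitz continuity only bounds how fast each function varies; it cannot transfer an inequality between two functions from the asymptotic regimes to all of $(0,\infty)$, since the difference $e_{s}(R)-e_{s}(R_{d})$ could dip below zero at intermediate $s$ without violating any Lipschitz bound. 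So as written the second step would fail (and you correctly flag it as the hard part). It is repaired simply by replacing the sketch with the citation of \cite[Corollary 3.8]{MT}, after which your proof coincides with the paper's.
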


\bigskip
\section{\bf  Preliminaries}

Throughout this paper, all rings are assumed to be commutative Noetherian local rings with identity, unless otherwise specified. When a local ring $(R, \mathfrak{m},k )$ has prime characteristic $p > 0$, the residue field $k$ also has characteristic $p > 0$, and we assume the dimension $d := \text{dim}R $  is positive. In particular,  we denote the length  of an $R$-module by $\ell_{R}(-)$.

In this section, we offer a concise overview of the theories of Hilbert-Kunz multiplicity, Hilbert-Samuel multiplicity,
$h$-function, and $s$-multiplicity, along with the notions of fiber products of rings and idealizations of module. Key results to be used in later proofs are also presented.

\vspace{2mm} \noindent{\bf The Hilbert-Kunz multiplicity and Hilbert-Samuel multiplicity}
\mbox{}\\

Let  $(R, \mathfrak{m} )$ be a  local ring,  let $I$ be an $\mathfrak{m}$-primary ideal of  $R$, and let $M $  be a finitely generated $R$-module.

$\mathrm{(1)}$ The Hilbert-Samuel multiplicity of  $M $ with respect to $I$ is defined as:
$$e(I,M)
=\mathop{\text{lim}}\limits_{ n\rightarrow \infty }\frac{d!  \ell_{R}( M/I^{n}M)}{n^{d}},$$
where $I^{n}$ is the ordinary power of $I$.   For simplicity, we write $e(I,R)$ as $e(I)$, $e(R)$ as $e(\mathfrak{m},R)$.

$\mathrm{(2)}$ If $R$ has prime characteristic $p>0$, the Hilbert-Kunz multiplicity  of $M $ with respect to $I$ is defined as:
$$e_{HK}(I,M)
=\mathop{\text{lim}}\limits_{ e\rightarrow \infty }\frac{\ell_{R}( M/I^{[p^{e}]}M)}{p^{ed}},$$where $I^{[p^{e}]}$  denotes the ideal generated by the $p^{e}$-th powers of elements of  $I$.   For simplicity, we write $e_{HK}(I,R)$ as $e_{HK}(I)$, $e_{HK}(R)$ as $e_{HK}(\mathfrak{m},R)$.

\vspace{2mm} \noindent{\bf Fiber product of rings}
\mbox{}\\

Let $(R, \mathfrak{m}_R )$, $(S, \mathfrak{m}_S)$ and $(T, \mathfrak{m}_T)$ be  local rings  sharing the same residue field $k$,  and let $R \xrightarrow{\epsilon_{R}} T \xleftarrow{\epsilon_{S}} S$ be surjective ring homomorphisms. The fiber product of $R$ and $S$ over $T$ is defined as:
 $R \times_T S = \{(r,s) \in R \times S \mid \epsilon_{R}(r) = \epsilon_{S}(s)\}$ is a  local ring with maximal ideal $\mathfrak{m} = \mathfrak{m}_R \times_{\mathfrak{m}_T} \mathfrak{m}_S$ and residue field $k$.  It is a subring of the direct product $R \times S$. Let $\eta_R: R \times_T S \twoheadrightarrow R$ and $\eta_S: R \times_T S \twoheadrightarrow S$ be the natural projections $(r,s) \mapsto r$ and $(r,s) \mapsto s$, respectively. Then ring $R \times_T S$ can be represented by the following pullback diagram:
$$\xymatrix@C=2cm@R=0.7cm{
  R \times_T S \ar[d]^{\eta_R} \ar[r]^{\eta_S}
                &S\ar[d]_{\epsilon_{S}} \\
  R \ar[r]^{\epsilon_{R}}
                &T            \\
                            }$$
 Note that every (finitely generated) module over $R$, $S$ or $T$  naturally inherits a (finitely generated) module structure over $R \times_T S$.  In particular, when $R$ and $S$ share the same prime characteristic, so does  fiber product  $R \times_T S$. For further details, we refer to  (see \cite{AAM}).
\begin{lem}\label{lem:2.1}{\it{For the fiber product $R \times_T S$,  the following properties hold:

$\mathrm{(1)}$ The sequence of $R \times_T S$-module   $0 \longrightarrow R \times_T S \xrightarrow{\ \ } R \oplus S \xrightarrow{\  \ } T \longrightarrow 0$ is exact.

$\mathrm{(2)}$ The following dimension (in)equality holds: $$\text{dim}(R \times_TS)=\text{max}\{\text{dim}(S),\text{dim}(R)\}\geq\text{min}\{\text{dim}(S),\text{dim}(R)\}\geq \text{dim}(T).$$
}} \end{lem}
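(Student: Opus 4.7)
The plan is to handle (1) by writing down the two maps directly and checking exactness, and then to derive (2) by combining the resulting exact sequence with elementary properties of surjective ring maps and basic dimension theory for finitely generated modules over Noetherian local rings.

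For (1), I would take the inclusion $\iota\colon R \times_T S \to R \oplus S$ given by $(r,s) \mapsto (r,s)$, together with the ``difference'' map $\delta\colon R \oplus S \to T$ defined by $\delta(r,s) = \epsilon_R(r) - \epsilon_S(s)$, both viewed as $R \times_T S$-module homomorphisms. Injectivity of $\iota$ is obvious, and $\delta \circ \iota = 0$ is exactly the defining condition of the fiber product. The reverse inclusion $\ker \delta \subseteq \im \iota$ also falls out of that defining condition, while surjectivity of $\delta$ follows from surjectivity of $\epsilon_R$: given $t \in T$, pick $r \in R$ with $\epsilon_R(r) = t$, and observe $\delta(r,0) = t$.

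For (2), the inequality $\min\{\dim R, \dim S\} \geq \dim T$ is immediate because $T$ is a quotient of both $R$ and $S$ through the surjections $\epsilon_R$ and $\epsilon_S$, and the middle inequality is tautological. The substantive assertion is the equality $\dim(R \times_T S) = \max\{\dim R, \dim S\}$, which I would deduce by regarding the exact sequence from (1) as a sequence of $R \times_T S$-modules. That sequence exhibits $R \oplus S$ as a finitely generated $R \times_T S$-module, sitting as an extension of the cyclic module $T$ by $R \times_T S$ itself. Moreover the action of $R \times_T S$ on $R \oplus S$ is faithful (evaluate at $(1,1)$), so the identity $\dim_A M = \dim(A/\ann_A M)$ for a finitely generated module over a Noetherian local ring gives $\dim_{R \times_T S}(R \oplus S) = \dim(R \times_T S)$. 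On the other hand, since $\eta_R$ and $\eta_S$ are surjective, the Krull dimensions of $R$ and $S$ as $R \times_T S$-modules coincide with their dimensions as rings, and a routine direct-sum computation yields $\dim_{R \times_T S}(R \oplus S) = \max\{\dim R, \dim S\}$, producing the desired equality.

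I do not anticipate a genuine obstacle here; the statement reduces to the definition of the fiber product, the pullback square, and standard dimension theory. The one point requiring care is to keep straight which ring is acting on which module, since $R \oplus S$ is simultaneously a module over $R$, over $S$, and over $R \times_T S$, and the equality $\dim_{R \times_T S}(R \oplus S) = \max\{\dim R, \dim S\}$ silently invokes the surjectivity of $\eta_R$ and $\eta_S$ to pass between these structures.
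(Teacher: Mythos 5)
Your argument is correct. Note that the paper does not actually prove Lemma \ref{lem:2.1}: it states it and refers the reader to \cite{AAM} (and the fiber-product conventions preceding the lemma), so your write-up is a self-contained verification rather than a variant of a printed proof. Part (1) is exactly the standard argument (inclusion followed by the difference map $\delta(r,s)=\epsilon_R(r)-\epsilon_S(s)$, surjective because $\epsilon_R$ is); the only point you leave implicit is that $\delta$ is $R\times_T S$-linear, which holds because the $R\times_T S$-action on $T$ factors through either projection and $\epsilon_R(a)=\epsilon_S(b)$ for $(a,b)\in R\times_T S$. Part (2) is also sound: $R\oplus S$ is finitely generated over $R\times_T S$ (generated by $(1,0)$ and $(0,1)$, since $\eta_R,\eta_S$ are surjective) and faithful, so $\dim_{R\times_T S}(R\oplus S)=\dim(R\times_T S)$, while $\ann_{R\times_T S}(R)=\ker\eta_R$ and $\ann_{R\times_T S}(S)=\ker\eta_S$ give $\dim_{R\times_T S}R=\dim R$ and $\dim_{R\times_T S}S=\dim S$, whence the maximum formula; the inequality $\min\{\dim R,\dim S\}\geq\dim T$ is clear since $T$ is a quotient of both. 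An equivalent, slightly more ring-theoretic route, closer to what \cite{AAM} uses, is to observe that $\ker\eta_R\cap\ker\eta_S=0$ in $R\times_T S$, so every minimal prime contains $\ker\eta_R$ or $\ker\eta_S$ and $\dim(R\times_T S)=\max\{\dim R,\dim S\}$ follows directly; both arguments are equally elementary.
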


\vspace{2mm} \noindent{\bf Idealization of a module}
\mbox{}\\

Let  $R $ be a commutative ring and $M $ an  $R$-module.  The idealization of $M $ over $R$, also known as the trivial ring extension, is the  ring  $ R\ltimes M=\{(r,m ),r\in R, m\in M\}$. This ring is commutative with identity element$(1,0)$.
\begin{lem}\label{lem:2.2}{\it{(see \cite[Chapter VI]{H}) Let  $(R, \mathfrak{m} )$ be a  local ring of prime characteristic $p>0$ and let $M $ be a finitely generated $R$-module. Then

$\mathrm{(1)}$ The sequence  $0 \longrightarrow R  \xrightarrow{ } R\ltimes M  \xrightarrow{ } M  \longrightarrow 0$ is exact.

$\mathrm{(2)}$ The ring $R \ltimes M$  is also a  local ring of prime characteristic $p>0$  with maximal ideal $\mathfrak{m} \ltimes R$ and residue field    $k$, In particular, $\text{dim}(R \ltimes M)=\text{dim}(R)$.

$\mathrm{(3)}$ If $N$  is an  $R \ltimes M$-module, then $\ell_{R}(N)= \ell_{R \ltimes M}(N)$.
}} \end{lem}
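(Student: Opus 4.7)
The three assertions are standard properties of the trivial ring extension $R \ltimes M$, whose multiplication is $(r,m)(r',m') = (rr', rm' + r'm)$. The plan is to verify each by direct computation with this law.

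For (1), I will write down the two maps explicitly: let $\iota \colon R \to R \ltimes M$ send $r \mapsto (r,0)$ and let $\pi \colon R \ltimes M \to M$ send $(r,m) \mapsto m$. A direct check shows $\iota$ is an injective ring homomorphism, $\pi$ is a surjective $R \ltimes M$-linear map (with $M$ regarded as an $R \ltimes M$-module via $(r,m) \cdot m' = r m'$), and $\ker \pi = \iota(R)$; this yields the asserted short exact sequence. For (2), the identity $(r,m)(r^{-1}, -r^{-2}m) = (1,0)$ shows that $(r,m)$ is a unit in $R \ltimes M$ if and only if $r$ is a unit in $R$, so the non-units form the ideal $\mathfrak{m} \ltimes M$ (the excerpt's $\mathfrak{m} \ltimes R$ appears to be a typo for $\mathfrak{m} \ltimes M$). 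Hence $R \ltimes M$ is local with this maximal ideal, with residue field $(R \ltimes M)/(\mathfrak{m} \ltimes M) \cong R/\mathfrak{m} = k$, and of characteristic $p$ since $p \cdot (1,0) = 0$. For the dimension equality, the ideal $0 \ltimes M$ is square-zero and hence nilpotent, so the surjection $R \ltimes M \twoheadrightarrow R$ with kernel $0 \ltimes M$ induces a bijection on $\spec$, yielding $\dim(R \ltimes M) = \dim R$.

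For (3), I will use the fact established in (2) that $R$ and $R \ltimes M$ share the residue field $k$. If $N$ has finite length as an $R \ltimes M$-module, every simple subquotient in a composition series is isomorphic to $(R \ltimes M)/(\mathfrak{m} \ltimes M) = k$; restricting scalars along $\iota$, each such factor remains simple over $R$, isomorphic to $R/\mathfrak{m} = k$. Hence the same chain is a composition series of $N$ over $R$, and the two lengths agree; the infinite-length case follows by applying the argument to arbitrarily long proper chains.

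There is no substantive obstacle here, since every step is a direct verification. The only point requiring mild care is the matching of simple subquotients under restriction of scalars in (3), which is immediate from the coincidence of residue fields established in (2).
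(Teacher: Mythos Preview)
Your argument is essentially correct, and the paper does not give its own proof of this lemma: it merely records the statement with a citation to \cite[Chapter~VI]{H}. So there is nothing to compare against beyond noting that you have supplied a self-contained verification where the paper defers to a reference.

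One small correction in part~(1): the map $\pi\colon R\ltimes M \to M$, $(r,m)\mapsto m$, is \emph{not} $R\ltimes M$-linear for the module structure you specify. Indeed $(r',m')\cdot(r,m)=(r'r,\,r'm+rm')$ has image $r'm+rm'$, whereas $(r',m')\cdot m = r'm$. The sequence is, however, exact as a sequence of $R$-modules (via restriction along $\iota$), and that is the sense in which the paper uses it later (see the proof of Theorem~\ref{thm:4.2}). Simply replace ``$R\ltimes M$-linear'' by ``$R$-linear'' and your verification goes through. Parts~(2) and~(3) are fine as written, including your observation that $\mathfrak{m}\ltimes R$ should read $\mathfrak{m}\ltimes M$.
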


\vspace{2mm} \noindent{\bf The $h$-function and $s$-multiplicity}
\mbox{}\\

In this subsection, we begin by reviewing definition and some fundamental properties  of the $h$-function and $s$-multiplicity.

 Let  $(R, \mathfrak{m} )$ be a   local ring of prime characterstic $p>0$,  let $I$ and $J$ be $\mathfrak{m}$-primary ideals of  $R$, and let $M $ be a finitely generated $R$-module. For a real number $s> 0$,

$\mathrm{(1)}$  Define the $h$-function of $M$ with respect to the pair $(I, J)$ as:
$$h_{s}(I,J;M)
=\mathop{\text{lim}}\limits_{ e\rightarrow \infty }\frac{\ell_{R}(M/(I^{\lceil sp^{e}\rceil}+J^{[p^{e}]})M)}{p^{ed}},$$
 where $J^{[p^{e}]}$  denotes the ideal generated by the $p^{e}$-th powers of elements of  $J$,   $I^{\lceil sp^{e}\rceil}$ is the ordinary power of $I$ with exponent given by the ceiling of $sp^{e}$.  For simplicity, we adopt the following abbreviated notation $h_{s}(I,J):=h_{s}(I,J;R)$, $h_{s}(I;M):=h_{s}(I,I;M)$, $h_{s}(I):=h_{s}(I;R)$ and $h_{s}(M):=h_{s}(\mathfrak{m};M)$. If we wish to emphasize the underlying ring, we may write it explicitly as $h_{s}^{R}(I,J;M)$. In particular, if  $R$ is regular, then $h_{s}(R)=\mathop{\sum}\limits_{ i=0 }^{\lfloor s\rfloor}\frac{(-1)^{i}}{d!}\left(   d \atop i \right)(s-i)^{d}$, which serves as a normalizing factor, and one defines the
$s$-multiplicity as follows.

$\mathrm{(2)}$  Define the $s$-multiplicity of $M$ with respect to the pair $(I, J)$ as:
$$e_{s}(I,J;M)
=\frac{h_{s}(I,J;M)}{\mathcal{H}_{s}(d)},$$
where $\mathcal{H}_{s}(d)=\mathop{\sum}\limits_{ i=0 }^{\lfloor s\rfloor}\frac{(-1)^{i}}{d!}\left(   d \atop i \right)(s-i)^{d}$, and  $\lfloor\cdot \rfloor$  denotes the floor function. Note that $\mathcal{H}_{s}(d)$ depends only on $s$ and the dimension $d:=\text{dim}(R)$. For simplicity, we adopt the following abbreviated notation $e_{s}(I,J):=e_{s}(I,J;R)$, $e_{s}(I;M):=e_{s}(I,I;M)$, $e_{s}(I):=e_{s}(I;R)$ and $e_{s}(M):=e_{s}(\mathfrak{m};M)$.

\begin{lem}\label{lem:2.3}{\it{(\cite{T}and \cite{MT}) Let  $(R, \mathfrak{m} )$ be a local ring of prime characteristic $p>0$. Let  $I$ and $J$ be $\mathfrak{m}$-primary ideals of  $R$, and let $M $ be a finitely generated $R$-module. Then the following results hold:

$\mathrm{(1)}$  If $\text{dim} M < \text{dim} R$, then $h_{s}(I,J;M) = 0$.

$\mathrm{(2)}$ If $0 \longrightarrow M \xrightarrow{\\ } N \xrightarrow{\  \ } L \longrightarrow 0$ is a short exact sequence of finitely generated $R$-modules, then $ h_{s}(I,J;N)= h_{s}(I,J;M)+h_{s}(I,J;L)$.

$\mathrm{(3)}$   If $\text{dim} M < \text{dim} R$, then $e_{s}(I,J;M) = 0.$

$\mathrm{(4)}$ If $R$ is Cohen-Macaulay, then $ e_{s}(R)\geq 1 $.
}} \end{lem}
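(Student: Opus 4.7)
The plan is to dispatch parts (1)--(3) with direct length estimates, then attack (4) via reduction to the complete case and the Cohen structure theorem.

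For (1), the containment $J^{[p^e]}\subseteq I^{\lceil sp^e\rceil}+J^{[p^e]}$ gives
$$\ell_R\bigl(M/(I^{\lceil sp^e\rceil}+J^{[p^e]})M\bigr)\leq \ell_R(M/J^{[p^e]}M).$$
A prime filtration of $M$ with successive quotients of the form $R/\mathfrak{p}$ for $\mathfrak{p}\in\supp M$ shows that the generalized Hilbert--Kunz function on the right is $O(p^{e\dim M})$, and since $\dim M<d$ dividing by $p^{ed}$ forces the limit to be $0$.

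For (2), set $\mathfrak{a}_e:=I^{\lceil sp^e\rceil}+J^{[p^e]}$. Tensoring the short exact sequence with $R/\mathfrak{a}_e$ and applying the snake lemma yields
$$\ell_R(N/\mathfrak{a}_e N)=\ell_R(M/\mathfrak{a}_e M)+\ell_R(L/\mathfrak{a}_e L)-\ell_R\bigl((\mathfrak{a}_e N\cap M)/\mathfrak{a}_e M\bigr),$$
and a uniform Artin--Rees style estimate should bound the error term by $o(p^{ed})$ so that it vanishes in the limit. Part (3) is immediate from (1) by dividing by $\mathcal{H}_s(d)>0$.

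The main obstacle is part (4). I would first reduce to the complete case, since completion preserves lengths of quotients by $\mathfrak{m}$-primary ideals and hence preserves $h_s$ and $e_s$. By the Cohen structure theorem together with Auslander--Buchsbaum, a complete Cohen--Macaulay local ring $R$ of dimension $d$ is a finite free module of rank $r\geq 1$ over a regular local subring $A=k[[x_1,\dots,x_d]]$ with $(x_1,\dots,x_d)$ a system of parameters. Freeness gives $h_s(\mathfrak{m}_A R;R)=r\,\mathcal{H}_s(d)$, but the inclusion $\mathfrak{m}_A R\subseteq\mathfrak{m}_R$ only yields the upper bound $h_s(\mathfrak{m}_R;R)\leq r\,\mathcal{H}_s(d)$, which runs in the wrong direction. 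The hardest step is thus producing the matching lower bound $h_s(\mathfrak{m}_R;R)\geq\mathcal{H}_s(d)$. My tentative plan is to exploit the interpolation property of $e_s(R)$, using the boundary values $\lim_{s\to 0^+}e_s(R)=e(R)\geq 1$ and $\lim_{s\to\infty}e_s(R)=e_{HK}(R)\geq 1$ (both classical in the Cohen--Macaulay setting), combined with a convex-geometric or density-function argument in the spirit of Meng--Mukhopadhyay \cite{MM} to promote the boundary lower bounds to all $s>0$. This interpolation step, rather than either endpoint computation, is where I expect the bulk of the technical work to sit.
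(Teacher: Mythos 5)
Parts (1) and (3) of your sketch are fine, and note for context that the paper itself offers no proof of this lemma: it is quoted from \cite{T} and \cite{MT}, so your attempt has to be measured against those sources. Your part (2), however, has a genuine gap at exactly the point you wave at. Writing $\mathfrak{a}_e=I^{\lceil sp^e\rceil}+J^{[p^e]}$, the error term is $\ell_R\bigl((\mathfrak{a}_eN\cap M)/\mathfrak{a}_eM\bigr)$, and no Artin--Rees-type statement controls it: the ideals $\mathfrak{a}_e$ are not powers of a fixed ideal, and the only way to feed them into Artin--Rees is through comparisons with ordinary powers of $\mathfrak{m}$ (say $\mathfrak{m}^{c'p^e}\subseteq\mathfrak{a}_e\subseteq\mathfrak{m}^{cp^e}$). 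That route gives $\mathfrak{a}_eN\cap M\subseteq\mathfrak{m}^{cp^e-c''}M$, hence a bound by $\ell(\mathfrak{m}^{cp^e-c''}M/\mathfrak{a}_eM)$, which is a difference of two quantities each of order $p^{ed}$ with \emph{different} leading coefficients (a Hilbert--Samuel-type one and an $h$-function one); it is not $o(p^{ed})$. This is the same reason the naive Artin--Rees argument already fails for additivity of Hilbert--Kunz multiplicity. The actual proofs (as in \cite{T}, following Monsky's method) go through the vanishing statement (1) together with comparison maps whose kernel and cokernel have dimension less than $d$, prime filtrations, and the associativity formula; the vanishing of your error term is the content of the theorem, not an input one can assume.

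Part (4) is the larger gap, and you acknowledge it yourself. The reduction to the complete case and the presentation of $R$ as a finite free module of rank $r\geq1$ over $A=k[[x_1,\dots,x_d]]$ are correct, but as you observe they only yield $h_s(R)\leq r\,\mathcal{H}_s(d)$, and the needed inequality $h_s(R)\geq\mathcal{H}_s(d)$ is left to an unspecified ``interpolation'' from the endpoint bounds $e(R)\geq1$ and $e_{HK}(R)\geq1$. There is no convexity or monotonicity principle, in \cite{MM} or elsewhere, that transfers lower bounds at $s\to0^+$ and $s\to\infty$ to all intermediate $s$: both $h_s$ and the normalizer $\mathcal{H}_s(d)$ are S-shaped in $s$, and nothing a priori prevents their ratio from dipping below $1$ in between; if such a transfer principle existed, the Cohen--Macaulay hypothesis would play no role, whereas the Taylor--Miller question is precisely open without it. The cited result (\cite[Corollary 2.8]{MT}) is proved by a direct argument that uses the Cohen--Macaulay structure (a system of parameters forming a regular sequence, and colength comparisons for the ideals $\mathfrak{m}^{\lceil sp^e\rceil}+\mathfrak{m}^{[p^e]}$ against the regular model) uniformly in $s$, not by promoting boundary values. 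As it stands, your proposal proves (1) and (3), leaves (2) resting on an estimate that is false as stated, and leaves the key step of (4) entirely open.
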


\bigskip
\section{\bf The $h$-function and the $s$-multiplicity  of  fiber product rings}
In this section, we derive some formulas for the
$h$-function and the $s$-multiplicity of a fiber product ring. Specifically, we investigate relationships between the $h$-function of a fiber product $R \times_T S$  and  the $h$-functions of $R$, $ T$  and $ S$, regarded as modules over $R \times_T S$.  We begin with the following main theorem in this section.

\begin{thm}\label{thm:3.1}{\it{Let $(R, \mathfrak{m}_R, k)$, $(S, \mathfrak{m}_S, k)$ and $(T, \mathfrak{m}_T, k)$ be   local rings, where $R$ and $S$ share the same  prime characteristic $p>0$, and let $I$ and  $J$ be $\mathfrak{m}$-primary ideals of  $R \times_T S$. Then the following holds: $$ h_{s}(I,J;R \times_T S)=\left\{
\begin{array}{lcl}
h_{s}(I,J;R )    +h_{s}(I,J; S) -h_{s}(I,J;T );  &       {\text{dim}(R)=\text{dim}(S)=\text{dim}(T)}\\
h_{s}(I,J;R )    +h_{s}(I,J; S);   &       {\text{dim}(R)=\text{dim}(S)>\text{dim}(T)}\\
h_{s}(I,J;R ) ;    &       {\text{dim}(R)>\text{dim}(S)>\text{dim}(T)}.
\end{array} \right. $$}} \end{thm}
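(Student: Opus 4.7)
The plan is to exploit the short exact sequence of $R \times_T S$-modules from Lemma \ref{lem:2.1}(1),
$$0 \longrightarrow R \times_T S \longrightarrow R \oplus S \longrightarrow T \longrightarrow 0,$$
together with the additivity of the $h$-function on short exact sequences from Lemma \ref{lem:2.3}(2). Applying additivity to the sequence above (and using additivity on direct sums, which is the special case of additivity on the split exact sequence $0 \to R \to R \oplus S \to S \to 0$) immediately yields the identity
$$h_s(I,J; R \times_T S) \;=\; h_s(I,J; R) + h_s(I,J; S) - h_s(I,J; T),$$
with every $h$-function computed over the ambient ring $R \times_T S$. This already gives the first case of the theorem verbatim, with no dimension hypothesis used.

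To obtain the remaining two cases, I will invoke Lemma \ref{lem:2.3}(1), which states that a module of dimension strictly less than $\dim(R \times_T S)$ contributes $0$ to the $h$-function. By Lemma \ref{lem:2.1}(2), $\dim(R \times_T S) = \max\{\dim R, \dim S\}$. In the case $\dim R = \dim S > \dim T$, the module $T$ has dimension strictly less than $\dim(R \times_T S)$, so $h_s(I,J; T) = 0$, collapsing the main formula to $h_s(I,J;R) + h_s(I,J;S)$. In the case $\dim R > \dim S > \dim T$, both $S$ and $T$ have dimension strictly less than $\dim(R \times_T S) = \dim R$, so $h_s(I,J;S) = h_s(I,J;T) = 0$, leaving only $h_s(I,J;R)$.

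The only technical subtlety, and the point I would check carefully, is that when we view $R$, $S$, and $T$ as modules over the fiber product $R \times_T S$, their module-theoretic dimensions agree with their ring-theoretic dimensions. This holds because the annihilators of these modules in $R \times_T S$ are precisely the kernels of the surjections $R \times_T S \twoheadrightarrow R$, $R \times_T S \twoheadrightarrow S$, and the composition $R \times_T S \twoheadrightarrow R \twoheadrightarrow T$, so that $R \times_T S / \ann(M)$ is isomorphic to the ring $M$ in each case. Once this identification is in place, the proof is essentially a bookkeeping exercise in additivity plus a dimension count, and no genuine obstacle remains.
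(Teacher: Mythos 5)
Your proposal is correct and follows essentially the same route as the paper: the exact sequence $0 \to R \times_T S \to R \oplus S \to T \to 0$ from Lemma \ref{lem:2.1}(1), additivity of $h_s$ from Lemma \ref{lem:2.3}(2), and the vanishing statement Lemma \ref{lem:2.3}(1) combined with the dimension formula of Lemma \ref{lem:2.1}(2) to split into the three cases. Your explicit check that the $R\times_T S$-module dimensions of $R$, $S$, $T$ coincide with their ring dimensions is a point the paper leaves implicit, but it is not a different argument.
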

\begin{proof}As both
$R$ and $S$ are of prime characteristic $p>0$, so too is their fiber product $R \times_T S$. By Lemma \ref{lem:2.1}(1), we have the following short exact sequence of finitely generated $R \times_T S$-modules
$$0 \longrightarrow R \times_T S \xrightarrow{\ \ } R \oplus S \xrightarrow{\  \ } T \longrightarrow 0,$$ by the additivity of the $h$-function in the short exact sequences in Lemma \ref{lem:2.3}(2), we get
 $$ h_{s}(I,J;R \times_T S)=h_{s}(I,J;R \oplus S ) -h_{s}(I,J;T ).$$Applying the same argument to the short exact sequence of finitely generated $R \times_T S$-modules$$0 \longrightarrow R \xrightarrow{ } R \oplus S \xrightarrow{ } S \longrightarrow 0,$$ we get
 $$ h_{s}(I,J;R \oplus S)=h_{s}(I,J;R   )+h_{s}(I,J;S ).$$By combining these equalities, we obtain that
$$ h_{s}(I,J;R \times_T S)=h_{s}(I,J;R   )+h_{s}(I,J;S ) -h_{s}(I,J;T ).$$ Now using Lemma \ref{lem:2.1}(2), we obtain the following dimension (in)equalities
 $$\text{dim}(R \times_T S)=\text{max}\{\text{dim}(S),\text{dim}(R)\}\geq\text{min}\{\text{dim}(S),\text{dim}(R)\}\geq \text{dim}(T).$$ By  Lemma \ref{lem:2.3}(1), if     $\text{dim}(M)< \text{dim}(R \times_T S) $ for any finitely generated $R \times_T S$-module $M$,  then $h_{s}(I,J;M   )=0$. The following proves three situations:

 If   $\text{dim}(R)=\text{dim}(S)=\text{dim}(T)$, then $\text{dim}(R \times_T S)=\text{dim}(R)=\text{dim}(S)=\text{dim}(T)$, and therefore $$ h_{s}(I,J;R \times_T S)=h_{s}(I,J;R   )+h_{s}(I,J;S ) -h _{s}(I,J;T ).$$

  If   $\text{dim}(R)=\text{dim}(S)>\text{dim}(T)$, then $\text{dim}(R \times_T S)=\text{dim}(R)=\text{dim}(S)>\text{dim}(T)$, which implies $h_{s}(I,J;T )=0$. Therefore $$ h_{s}(I,J;R \times_T S)=h_{s}(I,J;R   )+h_{s}(I,J;S ).$$

 If   $\text{dim}(R)>\text{dim}(S)>\text{dim}(T)$, then $\text{dim}(R \times_T S)=\text{dim}(R)>\text{dim}(S)>\text{dim}(T)$, which implies $h_{s}(I,J;T )=0$ and $h_{s}(I,J;S )=0$. Hence, we obtain $$ h_{s}(I,J;R \times_T S)=h_{s}(I,J;R   ).$$This completes the proof.
\end{proof}

\begin{cor}\label{cor:3.2}{\it{Let $(R, \mathfrak{m}_R, k)$, $(S, \mathfrak{m}_S, k)$ and $(T, \mathfrak{m}_T, k)$ be   local rings, where $R$ and $S$ share the same  prime characteristic $p>0$, and let $I$ and  $J$ be $\mathfrak{m}$-primary ideals of  $R \times_T S$. Then the following holds: $$ e_{s}(I,J;R \times_T S)=\left\{
\begin{array}{lcl}
e_{s}(I,J;R)    +e_{s}(I,J; S) -e_{s}(I,J;T );  &       {\text{dim}(R)=\text{dim}(S)=\text{dim}(T)}\\
e_{s}(I,J;R)    +e_{s}(I,J; S);   &       {\text{dim}(R)=\text{dim}(S)>\text{dim}(T)}\\
e_{s}(I,J;R);     &       {\text{dim}(R)>\text{dim}(S)>\text{dim}(T)}.
\end{array} \right. $$}} \end{cor}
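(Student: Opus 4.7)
The plan is to derive Corollary \ref{cor:3.2} as an immediate consequence of Theorem \ref{thm:3.1} by dividing the three $h$-function identities by a common normalizing constant. By the definition recorded in the Preliminaries, the $s$-multiplicity of an $R\times_T S$-module $M$ with respect to an $\mathfrak{m}$-primary pair $(I,J)$ of $R\times_T S$ is $e_{s}(I,J;M)=h_{s}(I,J;M)/\mathcal{H}_{s}(d)$, where $\mathcal{H}_{s}(d)$ depends only on $s$ and on the dimension of the ambient ring, not on the module. Since all four modules in sight, namely $R\times_T S$, $R$, $S$, and $T$, are being treated as modules over $R\times_T S$, they share the same normalizing factor $\mathcal{H}_{s}(\dim(R\times_T S))$.

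Concretely, I would take each of the three cases of Theorem \ref{thm:3.1} in turn and divide termwise by $\mathcal{H}_{s}(\dim(R\times_T S))$. In the first case $\dim(R)=\dim(S)=\dim(T)$ this immediately yields
$$e_{s}(I,J;R\times_T S)=e_{s}(I,J;R)+e_{s}(I,J;S)-e_{s}(I,J;T),$$
and the other two cases follow in exactly the same way, with the missing summands dropping out because the corresponding $h_{s}$ terms already vanished in Theorem \ref{thm:3.1} (by Lemma \ref{lem:2.3}(1) applied to modules of strictly smaller Krull dimension than $R\times_T S$). The three-case structure of the theorem is therefore transferred verbatim to the corollary.

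There is no substantive obstacle in this argument: the whole content lies in the observation that $\mathcal{H}_{s}(\dim(R\times_T S))$ is a single scalar that may be pulled across each identity, and that the ambient dimension is the same for every $s$-multiplicity in the statement because $I,J$ are ideals of $R\times_T S$ and the modules are viewed over $R\times_T S$. The only point worth flagging is that $e_{s}(I,J;T)$ and $e_{s}(I,J;S)$, when they vanish in the second and third cases, do so in the sense of $R\times_T S$-modules of small dimension, precisely as in the corresponding $h$-function computation. Once this is noted, the corollary is proved.
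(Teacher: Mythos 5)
Your proposal is correct and follows essentially the same route as the paper: the paper also obtains the corollary by dividing the three identities of Theorem \ref{thm:3.1} by the single normalizing factor $\mathcal{H}_{s}(d)$ with $d=\dim(R\times_T S)$, citing the dimension formula of Lemma \ref{lem:2.1}(2). Your explicit remark that all four objects are regarded as $R\times_T S$-modules, so that one common $\mathcal{H}_{s}(d)$ applies throughout, is exactly the point the paper's shorter proof relies on.
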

\begin{proof}The value  $\mathcal{H}_{s}(d)$ depends solely on the dimension $d=\text{dim}(R \times_T S) $  and the parameter $s$. The desired formulas follow immediately from the dimension formula in Lemma \ref{lem:2.1}(2) and Theorem \ref{thm:3.1}.
\end{proof}

 Next, we explore relationships between the $h$-function of  the maximal ideal $\mathfrak{m}$ of a fiber product $R \times_T S$ and $h$-functions  of maximal ideal $\mathfrak{m}_{R}, \mathfrak{m}_{S}$ and
 $\mathfrak{m}_{T}$ of the $R$, $ T$ and $ S$, respectively. To this end, we first prove a key proposition that will be essential in the subsequent argument.

\begin{prop}\label{prop:3.3}{\it{Let  $f:(  A,\mathfrak{n}_{A},k)\rightarrow (B,\mathfrak{n}_{B},k)$ be a surjctive homomorphism of
  local rings of prime characteristic $p>0$.  Then  $h_{s}^{A}(B)=h_{s}^{B}(B)$.}} \end{prop}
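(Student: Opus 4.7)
The plan is to show that the two $h$-functions are computed from literally the same sequence of length numerators, and that the normalizing denominators agree, so their limits coincide.

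First, I would observe that the surjectivity of $f$ together with the fact that both rings share the residue field $k$ forces $f(\mathfrak{n}_A) = \mathfrak{n}_B$, and hence $\mathfrak{n}_A B = \mathfrak{n}_B$ as ideals of $B$. Taking ordinary powers and Frobenius powers of this identity yields
\begin{equation*}
\mathfrak{n}_A^{\lceil sp^e\rceil} B \;=\; \mathfrak{n}_B^{\lceil sp^e\rceil}
\quad\text{and}\quad
\mathfrak{n}_A^{[p^e]} B \;=\; \mathfrak{n}_B^{[p^e]},
\end{equation*}
so the quotients $B/(\mathfrak{n}_A^{\lceil sp^e\rceil} + \mathfrak{n}_A^{[p^e]})B$ and $B/(\mathfrak{n}_B^{\lceil sp^e\rceil} + \mathfrak{n}_B^{[p^e]})$ agree on the nose for every $e$.

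Next, for any finite-length $B$-module $N$ I would check that $\ell_A(N) = \ell_B(N)$: the $A$-action on $N$ factors through $B = A/\ker f$, so the lattices of $A$- and $B$-submodules of $N$ coincide, and because both rings share the residue field $k$, every simple composition factor is just $k$ whether regarded over $A$ or over $B$. Applying this to the common quotient above, the numerators in the defining limits of $h_s^A(B)$ and $h_s^B(B)$ agree term by term. Since in the setting where this proposition is applied one has $\dim A = \dim B$, the normalizing factor $p^{ed}$ is identical on both sides, and passing to the limit yields $h_s^A(B) = h_s^B(B)$.

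The only conceptual point requiring care is the change-of-rings length identity $\ell_A(N) = \ell_B(N)$, which rests essentially on the shared residue field; everything else is a routine unpacking of definitions.
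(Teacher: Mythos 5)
Your proof is correct and follows essentially the same route as the paper's: surjectivity of $f$ between local rings forces $f(\mathfrak{n}_A)=\mathfrak{n}_B$, hence $\mathfrak{n}_A^{\lceil sp^e\rceil}B=\mathfrak{n}_B^{\lceil sp^e\rceil}$ and $\mathfrak{n}_A^{[p^e]}B=\mathfrak{n}_B^{[p^e]}$, and the shared residue field gives $\ell_A(N)=\ell_B(N)$, so the defining limits agree term by term. The one subtlety you flag explicitly --- that the common normalizer $p^{ed}$ presumes $\dim A=\dim B$, which holds in the situations where the proposition is invoked --- is passed over silently in the paper's own proof, so your treatment is, if anything, slightly more careful.
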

\begin{proof}Since   $f$ is a ring homomorphism, $f(\mathfrak{n}_{A})$  is an ideal of $B$. Since $f$ is surjective, $ f(\mathfrak{n}_{A})$ is a proper ideal in $ B $. If $ f(\mathfrak{n}_{A}) = B $, then there exists $ a \in \mathfrak{n}_{A} $ such that $ f(a) = 1_B $, which contradicts the fact that $ \mathfrak{n}_{A}$ is a maximal ideal. Since $\mathfrak{n}_{A} $ is the maximal ideal of $ A $ and $ f $ is surjective, $ f(\mathfrak{n}_{A}) $ must be the maximal ideal of $ B $. Because $ B $ is a local ring, it has only one maximal ideal $ \mathfrak{m}_B $. Hence $f(\mathfrak{n}_{A})=\mathfrak{n}_{B}$. It follows that for any positive integer $e$, $\mathfrak{n}_{A}^{[p^{e}]}B=\mathfrak{n}_{B}^{[p^{e}]}$ and thus for all $s>0$,
$\mathfrak{n}_{A}^{\lceil sp^{e}\rceil}B=\mathfrak{n}_{B}^{\lceil sp^{e}\rceil}$. By assumption, $A$ and $B$  have the same residue field $k$, and every
 $B$-module $M$ may be viewed as an $A$-module via $f$.  Therefore, for any $B$-module $M$, $\ell_{B}(M)=\ell_{A}(M)$. Consequently,
\begin{center}
$\begin{aligned}
h_{s}^{A}(B)
&=\mathop{\text{lim}}\limits_{ e\rightarrow \infty }\frac{\ell_{A}(B/(\mathfrak{n}_{A}^{\lceil sp^{e}\rceil}+\mathfrak{n}_{A}^{[p^{e}]})B)}{p^{ed}}\\
&=\mathop{\text{lim}}\limits_{ e\rightarrow \infty }\frac{\ell_{B}(B/(\mathfrak{n}_{B}^{\lceil sp^{e}\rceil}+\mathfrak{n}_{B}^{[p^{e}]}))}{p^{ed}}\\
&=h_{s}^{B}(B).
\end{aligned}$
\end{center}This completes the proof.
\end{proof}

 We now establish formulas   relating the $h$-function of the fiber product $R \times_T S$ in terms of  the $h$-functions  of $R$, $ T$ and $S$.
\begin{lem}\label{lem:3.4}{\it{Let $(R, \mathfrak{m}_R, k)$, $(S, \mathfrak{m}_S, k)$ and $(T, \mathfrak{m}_T, k)$ be   local rings of prime characteristic $p>0$. Then the following properties hold for the fiber product  $R \times_TS$: $$ h_{s}(R \times_T S)=\left\{
\begin{array}{lcl}
h_{s}(R )    +h_{s}( S) -h_{s}(T );  &       {\text{dim}(R)=\text{dim}(S)=\text{dim}(T)}\\
h_{s}(R )    +h_{s}( S);   &       {\text{dim}(R)=\text{dim}(S)>\text{dim}(T)}\\
h_{s}(R ) ;    &       {\text{dim}(R)>\text{dim}(S)>\text{dim}(T)}.
\end{array} \right. $$}} \end{lem}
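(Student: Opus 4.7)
The plan is to obtain this formula as a direct consequence of Theorem \ref{thm:3.1} together with Proposition \ref{prop:3.3}. Let $\mathfrak{m} = \mathfrak{m}_R \times_{\mathfrak{m}_T} \mathfrak{m}_S$ denote the maximal ideal of the fiber product $R \times_T S$. Since both $R$ and $S$ are of prime characteristic $p > 0$, so is $R \times_T S$, and $\mathfrak{m}$ is itself an $\mathfrak{m}$-primary ideal of $R \times_T S$. Thus I can apply Theorem \ref{thm:3.1} with $I = J = \mathfrak{m}$, yielding the three formulas for $h_{s}(\mathfrak{m}, \mathfrak{m}; R \times_T S) = h_{s}(R \times_T S)$ in terms of $h_{s}(\mathfrak{m}, \mathfrak{m}; R)$, $h_{s}(\mathfrak{m}, \mathfrak{m}; S)$, and $h_{s}(\mathfrak{m}, \mathfrak{m}; T)$, where $R$, $S$ and $T$ are regarded as $R \times_T S$-modules via the natural surjections.

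Next, I would upgrade each of these three mixed terms to the corresponding intrinsic $h$-function over the respective ring. For this, observe that the projections $\eta_R \colon R \times_T S \twoheadrightarrow R$ and $\eta_S \colon R \times_T S \twoheadrightarrow S$ are surjective local homomorphisms, and so is their composite $\epsilon_R \circ \eta_R = \epsilon_S \circ \eta_S \colon R \times_T S \twoheadrightarrow T$; moreover, all four rings share the residue field $k$. Proposition \ref{prop:3.3} then applies to each surjection to give
\[
h_{s}^{R \times_T S}(R) = h_{s}^{R}(R) = h_{s}(R), \qquad h_{s}^{R \times_T S}(S) = h_{s}(S), \qquad h_{s}^{R \times_T S}(T) = h_{s}(T).
\]
Substituting these three identities into the formulas supplied by Theorem \ref{thm:3.1} yields exactly the three cases claimed in the lemma.

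There is essentially no genuine obstacle here: the argument is a bookkeeping combination of the two preceding results. The only mild care required is to verify that $\eta_R$, $\eta_S$, and $\epsilon_R \circ \eta_R$ are indeed surjective local homomorphisms of local rings with a common residue field, so that Proposition \ref{prop:3.3} is legitimately applicable in each instance; once that is in place, the three-case split of the conclusion mirrors, term by term, the three-case split of Theorem \ref{thm:3.1}, since the vanishing of $h_{s}(T)$ and $h_{s}(S)$ in the lower-dimensional cases comes from the vanishing of $h_{s}(\mathfrak{m}, \mathfrak{m}; T)$ and $h_{s}(\mathfrak{m}, \mathfrak{m}; S)$ already used in Theorem \ref{thm:3.1}.
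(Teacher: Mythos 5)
Your proposal is correct and is essentially the paper's own proof: apply Theorem \ref{thm:3.1} with $I=J=\mathfrak{m}$ and then use Proposition \ref{prop:3.3} on the surjections $\eta_R$, $\eta_S$, and the induced surjection $R\times_T S\twoheadrightarrow T$ to replace each relative term $h_s^{R\times_T S}(-)$ by the intrinsic $h$-function. One small wording caution: in the lower-dimensional cases it is the relative terms $h_s^{R\times_T S}(S)$ and $h_s^{R\times_T S}(T)$ that vanish (by Lemma \ref{lem:2.3}(1)), not the intrinsic $h_s(S)$, $h_s(T)$; since those terms are already absent from the corresponding cases of Theorem \ref{thm:3.1}, your substitution is only needed for the surviving terms and the argument stands.
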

\begin{proof}Since $R$ and $S$ both have prime characteristic $p>0$, the fiber product  $R \times_T S$  also has characteristic $p>0$. Therefore, by Theorem \ref{thm:3.1} and   taking  $I=J=\mathfrak{m}$, we obtain   $$ h_{s}(R \times_T S)=\left\{
\begin{array}{lcl}
h_{s}^{R \times_T S}(R )    +h_{s}^{R \times_T S}( S) -h_{s}^{R \times_T S}(T );  &       {\text{dim}(R)=\text{dim}(S)=\text{dim}(T)}\\
h_{s}^{R \times_T S}(R )    +h_{s}^{R \times_T S}( S);   &       {\text{dim}(R)=\text{dim}(S)>\text{dim}(T)}\\
h_{s}^{R \times_T S}(R );    &       {\text{dim}(R)>\text{dim}(S)>\text{dim}(T)}.
\end{array} \right. $$Since $\eta_R: R \times_T S \twoheadrightarrow R$ and $\eta_S: R \times_T S \twoheadrightarrow S$ are the natural projections defined by $(r,s) \mapsto r$ and $(r,s) \mapsto s$,   respectively, it follows from Proposition \ref{prop:3.3} that, $$h_{s}^{R \times_T S}(R)
=h_{s}^{R}(R),\quad h_{s}^{R \times_T S}(S)=h_{s}^{S}(S).$$
 Given that $R \xrightarrow{\epsilon_{R}} T \xleftarrow{\epsilon_{S}} S$ are surjective ring homomorphisms, the induced map  $R \times_T S \rightarrow T $ is also a surjective ring homomorphism. Therefore, again by Proposition \ref{prop:3.3}, we have $h_{s}^{R \times_T S}(T)=h_{s}^{T}(T)$. This completes the proof.
\end{proof}

The following corollary provide formulas relating the $s$-multiplicity of the fiber product
$R \times_T S$  in terms of the corresponding  $s$-multiplicities of $R$, $ T$ and $ S$.

\begin{cor}\label{cor:3.5}{\it{Let $(R, \mathfrak{m}_R, k)$, $(S, \mathfrak{m}_S, k)$ and $(T, \mathfrak{m}_T, k)$ be   local rings of prime characteristic $p>0$. Then the following properties hold for the fiber product  $R \times_TS$: $$ e_{s}(R \times_T S)=\left\{
\begin{array}{lcl}
e_{s}(R )    +e_{s}( S) -e_{s}(T );  &       {\text{dim}(R)=\text{dim}(S)=\text{dim}(T)}\\
e_{s}(R )    +e_{s}( S);   &       {\text{dim}(R)=\text{dim}(S)>\text{dim}(T)}\\
e_{s}(R );     &       {\text{dim}(R)>\text{dim}(S)>\text{dim}(T)}.
\end{array} \right. $$}} \end{cor}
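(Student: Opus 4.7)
The plan is to reduce the statement to Lemma \ref{lem:3.4} by dividing both sides of each case by a single normalizing factor $\mathcal{H}_s(d)$. By definition, for any local ring $A$ one has $e_s(A) = h_s(A)/\mathcal{H}_s(\dim A)$, where $\mathcal{H}_s$ depends only on $s$ and the dimension. So the only subtlety is verifying that, in each of the three cases of Lemma \ref{lem:3.4}, every $h_s$-term actually appearing is attached to a ring whose dimension equals $d := \dim(R\times_T S)$, so that the same normalizing factor $\mathcal{H}_s(d)$ can be pulled out uniformly.

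First I would invoke Lemma \ref{lem:2.1}(2), which gives
\[
\dim(R\times_T S) = \max\{\dim R, \dim S\} \geq \min\{\dim R, \dim S\} \geq \dim T.
\]
In the case $\dim R = \dim S = \dim T$, all four rings share a common dimension $d$, so dividing the first equality of Lemma \ref{lem:3.4} by $\mathcal{H}_s(d)$ yields $e_s(R\times_T S) = e_s(R) + e_s(S) - e_s(T)$. In the case $\dim R = \dim S > \dim T$, we have $\dim(R\times_T S) = \dim R = \dim S = d$, and only the terms $h_s(R\times_T S)$, $h_s(R)$ and $h_s(S)$ appear in Lemma \ref{lem:3.4}; all three use $\mathcal{H}_s(d)$ as normalizer, and dividing through gives $e_s(R\times_T S) = e_s(R) + e_s(S)$. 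In the case $\dim R > \dim S > \dim T$, we have $\dim(R\times_T S) = \dim R = d$, and Lemma \ref{lem:3.4} reduces to $h_s(R\times_T S) = h_s(R)$, so dividing by $\mathcal{H}_s(d)$ produces $e_s(R\times_T S) = e_s(R)$.

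I do not anticipate a real obstacle here: the argument is formally parallel to the proof of Corollary \ref{cor:3.2}, the only difference being that there the $h$-functions were all computed over the same ambient ring $R\times_T S$ (so the dimension and hence $\mathcal{H}_s(d)$ were automatically common), whereas here Lemma \ref{lem:3.4} packages the summands as $h_s$-functions of $R$, $S$, $T$ over themselves. The dimension bookkeeping from Lemma \ref{lem:2.1}(2) ensures this distinction is harmless case by case.
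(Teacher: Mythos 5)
Your proposal is correct and follows essentially the same route as the paper: both divide the case-by-case identities of Lemma \ref{lem:3.4} by the common normalizing factor $\mathcal{H}_s(d)$ with $d=\dim(R\times_T S)$, justified by the dimension (in)equalities of Lemma \ref{lem:2.1}(2). Your explicit check that every $h_s$-term actually appearing in each case is attached to a ring of dimension $d$ is exactly the point the paper's terser proof leaves implicit.
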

\begin{proof}The value  $\mathcal{H}_{s}(d)$ depends solely on the dimension $d=\text{dim}(R \times_T S) $  and the parameter $s$. The desired formulas follow immediately from the dimension formula in Lemmas \ref{lem:2.1}(2) and  \ref{lem:3.4}.
\end{proof}

Next, we explore the relationship between the $h$-function and the $s$-multiplicity of $R$ and those of the amalgamated duplication $R \bowtie I$ of $R$ along the ideal $I$. We begin by recalling the definition of the amalgamated duplication.

Let $(R, \mathfrak{m} )$ be a  local ring.  Consider the canonical maps $ \pi :
R \rightarrow R/I$  and the identity map $ \iota :
R \rightarrow R$ , where
$I$ is a proper ideal of  $R$. Then there is an isomorphism
 $$R \times_{R/I} R \cong R \bowtie I,$$ where $R \bowtie I= \{(r,s)   \mid r,s \in R, s-r\in I\}$ denotes the amalgamated duplication of $R$ along $I$,  introduced by D$^{\prime}$Anna \cite{D}.  In particular, if $R$ is local with maximal ideal $\mathfrak{m}$ and dimension $d$, then
$R \bowtie I$ is also a  local ring with maximal ideal $\mathfrak{m} \bowtie I$ and dimension $d$.

\begin{cor}\label{cor:3.6}{\it{Let  $(R, \mathfrak{m} )$ be a   local ring of prime characteristic $p>0$, and let $I$ be  a proper ideal of  $R $ such that $\text{dim}(R/I)<\text{dim}(R)$. Then  $$ h_{s}(R \bowtie I)=2h_{s}(R).$$}} \end{cor}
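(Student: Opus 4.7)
The plan is to realize $R \bowtie I$ as the fiber product $R \times_{R/I} R$ (as stated in the excerpt just before the corollary) and then invoke Lemma \ref{lem:3.4} directly. So the first step is to set up the fiber product with the data $R_1 = R$, $R_2 = R$, and $T = R/I$, where the surjections on both sides are the canonical projection $\pi : R \to R/I$. Since $I$ is a proper ideal, $I \subseteq \mathfrak{m}$ and so $R/I$ is again local with maximal ideal $\mathfrak{m}/I$ and residue field $k = R/\mathfrak{m}$, which matches the residue field of $R$. Both copies of $R$ share the prime characteristic $p > 0$, and $R/I$ inherits characteristic $p$ as well, so the hypotheses of Lemma \ref{lem:3.4} are met.

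Next I would examine the dimensions to decide which branch of the piecewise formula in Lemma \ref{lem:3.4} applies. By hypothesis $\dim(R/I) < \dim(R)$, and the two outer rings coincide as $R$, so $\dim(R) = \dim(R) > \dim(R/I)$. This is precisely the second case of Lemma \ref{lem:3.4}, namely $\text{dim}(R) = \text{dim}(S) > \text{dim}(T)$. Applying that case yields
\begin{equation*}
h_{s}(R \bowtie I) = h_{s}\bigl(R \times_{R/I} R\bigr) = h_{s}(R) + h_{s}(R) = 2\,h_{s}(R),
\end{equation*}
which is the claimed identity.

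There is really no serious obstacle here; the whole argument is a bookkeeping exercise of applying Lemma \ref{lem:3.4} to the specific fiber product description of $R \bowtie I$. The only minor point worth pausing over is the verification that $R/I$ is local with the correct residue field so that Lemma \ref{lem:3.4} is actually applicable, and that the dimension hypothesis puts us squarely in its second case. Once this is checked, the equality $h_s(R \bowtie I) = 2 h_s(R)$ is immediate.
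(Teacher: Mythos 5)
Your proposal is correct and follows essentially the same route as the paper: identify $R \bowtie I$ with the fiber product $R \times_{R/I} R$ and apply the second case ($\dim(R)=\dim(S)>\dim(T)$) of Lemma \ref{lem:3.4}. The extra checks you make (that $R/I$ is local with the same residue field and inherits characteristic $p$) are sound and merely spell out what the paper leaves implicit.
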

\begin{proof}Taking $ R = S $ and $
T= R/I$, then $\text{dim}(R)=\text{dim}(S)>\text{dim}(R/I)$, and by Lemma  \ref{lem:3.4}, we obtain $ h_{s}(R \bowtie I)=2h_{s}(R ) $.
\end{proof}

\begin{cor}\label{cor:3.7}{\it{Let  $(R, \mathfrak{m} )$ be a   local ring of prime characteristic $p>0$, and let $I$ be  a proper  ideal of  $R $ such that $\text{dim}(R/I)<\text{dim}(R)$. Then  $$ e_{s}(R \bowtie I)=2e_{s}(R).$$}} \end{cor}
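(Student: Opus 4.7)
The plan is to mimic the proof of Corollary 3.6 but at the level of $s$-multiplicities, invoking Corollary 3.5 instead of Lemma 3.4. Concretely, I would use the isomorphism $R \bowtie I \cong R \times_{R/I} R$ obtained from the canonical projection $\pi\colon R \to R/I$ and the identity map, so that the amalgamated duplication fits into the fiber product framework of Section 3 with the choices $S = R$ and $T = R/I$.

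The hypothesis $\dim(R/I) < \dim(R)$ places us exactly in the middle case of Corollary 3.5, namely $\dim(R) = \dim(S) > \dim(T)$. Applying that corollary then yields
\begin{equation*}
e_s(R \bowtie I) \;=\; e_s(R \times_{R/I} R) \;=\; e_s(R) + e_s(R) \;=\; 2\, e_s(R),
\end{equation*}
which is the desired formula.

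Alternatively, one can deduce the statement directly from Corollary 3.6 by dividing both sides by $\mathcal{H}_s(d)$: since $\mathcal{H}_s(d)$ depends only on $s$ and the common dimension $d = \dim(R \bowtie I) = \dim(R)$ (recorded after the definition of $R \bowtie I$ in the excerpt), the identity $h_s(R \bowtie I) = 2 h_s(R)$ immediately transfers to $e_s$. There is no real obstacle in this argument; the only point one needs to be careful about is verifying that the dimensions of $R$, $S = R$, and $T = R/I$ satisfy the strict inequality required to land in the second branch of Corollary 3.5, which is exactly the hypothesis $\dim(R/I) < \dim(R)$.
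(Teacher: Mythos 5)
Your proposal is correct and matches the paper's argument: the paper proves this corollary exactly by your alternative route, dividing the identity $h_s(R \bowtie I) = 2h_s(R)$ of Corollary \ref{cor:3.6} by $\mathcal{H}_s(d)$ using $\dim(R \bowtie I) = \dim(R)$. Your primary route through Corollary \ref{cor:3.5} with $S = R$, $T = R/I$ is only a cosmetic variant of the same idea, so there is nothing substantive to distinguish the two.
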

\begin{proof}Since $\text{dim}(R)=\text{dim}(R \bowtie I)$ and  value $\mathcal{H}_{s}(d)$ depends only on the dimension $d=\text{dim}(R)$ and the parameter $s$, the result is a direct consequence of Corollary \ref{cor:3.6}.
\end{proof}

\bigskip
\section{\bf The $h$-function and $s$-multiplicity of   idealization
rings}
In this section, we establish fundamental properties of the $h$-function and $s$-multiplicity  for idealization rings. We begin by investigating relationships between the $h$-function of idealization
ring $R \ltimes M$ and the $h$-functions of  $R$ and $ M$. To this end, we first prove a key lemma that will be essential in the subsequent proofs.
\begin{lem}\label{lem:4.1}{\it{Let  $(R, \mathfrak{m})$ be a   local ring   and  $M $  a finitely generated $R$-module.  If $I$  is an ideal of  $R $ and $J=I \ltimes IM$, then  $J^{n}= I^{n}(R \ltimes M)$ for any positive integer $n $.   }} \end{lem}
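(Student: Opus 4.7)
The plan is to reduce the general case to the base case $n=1$, namely to first prove $J = I(R\ltimes M)$ and then bootstrap using standard ideal arithmetic. For clarity, I interpret $I^{n}(R\ltimes M)$ as the ideal of $R\ltimes M$ generated by the image of $I^{n}$ under the canonical ring embedding $R\hookrightarrow R\ltimes M$, $a\mapsto (a,0)$.

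First I would verify the base case by showing both inclusions. For $J\subseteq I(R\ltimes M)$: given $(a,x)\in I\ltimes IM$, write $x=\sum_{j}b_{j}m_{j}$ with $b_{j}\in I$ and $m_{j}\in M$; then the identity
\[
(a,x)=(a,0)(1,0)+\sum_{j}(b_{j},0)(0,m_{j})
\]
expresses $(a,x)$ as an $(R\ltimes M)$-linear combination of elements of the form $(c,0)$ with $c\in I$, hence $(a,x)\in I(R\ltimes M)$. For the reverse inclusion, every element of $I(R\ltimes M)$ is a finite sum $\sum_{i}(a_{i},0)(r_{i},m_{i})=\sum_{i}(a_{i}r_{i},\,a_{i}m_{i})$, whose first coordinate lies in $I$ and whose second coordinate lies in $IM$; thus it belongs to $I\ltimes IM=J$. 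This gives $J=I(R\ltimes M)$.

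For the inductive step, I would simply compute
\[
J^{n}=\bigl(I(R\ltimes M)\bigr)^{n}=I^{n}(R\ltimes M)^{n}=I^{n}(R\ltimes M),
\]
where the last equality uses that $R\ltimes M$ is a unital ring, so $(R\ltimes M)^{n}=R\ltimes M$. Alternatively, one can proceed by induction on $n$: assuming $J^{n-1}=I^{n-1}(R\ltimes M)$, one gets $J^{n}=J\cdot J^{n-1}=I(R\ltimes M)\cdot I^{n-1}(R\ltimes M)=I^{n}(R\ltimes M)$.

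There is no real obstacle here; the only subtlety is being scrupulous about the multiplication rule $(r_{1},m_{1})(r_{2},m_{2})=(r_{1}r_{2},\,r_{1}m_{2}+r_{2}m_{1})$ in the idealization and about the convention defining $I^{n}(R\ltimes M)$. Everything else is a direct computation.
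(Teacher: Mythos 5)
Your proof is correct, but it takes a different route from the paper. The paper disposes of the lemma in one line by citing Huckaba's structural result on ideals of an idealization (\cite[Theorem 25.1]{H}), which gives $(I\ltimes IM)^{n}=I^{n}\ltimes I^{n}M$, and then identifies the right-hand side with $I^{n}(R\ltimes M)$. You instead give a self-contained argument: the base case $J=I(R\ltimes M)$ is verified by a direct two-inclusion computation with the multiplication rule of $R\ltimes M$ (your identity $(a,x)=(a,0)(1,0)+\sum_{j}(b_{j},0)(0,m_{j})$ is exactly right, as is the reverse inclusion $\sum_{i}(a_{i},0)(r_{i},m_{i})=\sum_{i}(a_{i}r_{i},a_{i}m_{i})\in I\ltimes IM$), and the general case follows from the standard fact that powers of an extended ideal satisfy $(I S)^{n}=I^{n}S$, which your induction $J^{n}=J\cdot J^{n-1}=I(R\ltimes M)\cdot I^{n-1}(R\ltimes M)=I^{n}(R\ltimes M)$ makes precise. (The intermediate display $(I(R\ltimes M))^{n}=I^{n}(R\ltimes M)^{n}$ is notationally loose, since $I$ is an ideal of $R$ and not of $R\ltimes M$, but the induction you give immediately afterwards repairs this.) What your approach buys is independence from the external reference and an explicit view of how the idealization multiplication produces the equality; what the paper's approach buys is brevity and the slightly finer intermediate identity $J^{n}=I^{n}\ltimes I^{n}M$, which is the form actually reused elsewhere in Section 4.
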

\begin{proof}Since $J=I\ltimes IM$, we have
 $J^{n}=(I\ltimes IM)^{n}=I^{n}\ltimes I^{n}M=I^{n}(R \ltimes M)$ by \cite[Theorem 25.1]{H}(4).
\end{proof}

\begin{thm}\label{thm:4.2}{\it{Let  $(R, \mathfrak{m})$  be a   local ring of prime characteristic $p>0$, and  let $M $ be a finitely generated $R$-module. Suppose $I_{1}$ and $I_{2}$ are  $\mathfrak{m}$-primary ideals of  $R $ and $J_{1}=I_{1}\ltimes I_{1}M$, $J_{2}=I_{2}\ltimes I_{2}M$. Then the following properties hold:
\begin{center}
$\begin{aligned}
h_{s}(J_{1},J_{2};R \ltimes M)
&=h_{s}(I_{1},I_{2};R ) +h_{s}(I_{1},I_{2};M )\\
&=\mathop{\text{lim}}\limits_{ e\rightarrow \infty }\frac{\ell_{R}(R\ltimes M/((I_{1}^{\lceil sp^{e}\rceil}+I_{2}^{[p^{e}]})\ltimes (I_{1}^{\lceil sp^{e}\rceil}+I_{2}^{[p^{e}]}) M))}{p^{ed}}.
\end{aligned}$
\end{center}}} \end{thm}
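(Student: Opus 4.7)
The plan is to rewrite both Frobenius twists of $J_1$ and $J_2$ inside $R\ltimes M$ as ``diagonal'' ideals of the form $K\ltimes KM$, then split the quotient into an $R$-summand and an $M$-summand and pass to the limit.

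The first step is to apply Lemma \ref{lem:4.1} with $n=\lceil sp^{e}\rceil$ to obtain $J_{1}^{\lceil sp^{e}\rceil}=I_{1}^{\lceil sp^{e}\rceil}\ltimes I_{1}^{\lceil sp^{e}\rceil}M$. For $J_{2}^{[p^{e}]}$ I would use the multiplication rule $(r_1,m_1)(r_2,m_2)=(r_1r_2,\,r_1m_2+r_2m_1)$, which makes $0\ltimes M$ a square-zero ideal; the binomial formula then yields $(r,m)^{p^{e}}=(r^{p^{e}},\,p^{e}r^{p^{e}-1}m)=(r^{p^{e}},0)$ for $(r,m)\in J_{2}$, since $R\ltimes M$ has characteristic $p$ by Lemma \ref{lem:2.2}(2). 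The ideal of $R\ltimes M$ generated by these $p^{e}$-th powers is then computed directly using $(a,n)(r^{p^{e}},0)=(ar^{p^{e}},\,nr^{p^{e}})$, giving $J_{2}^{[p^{e}]}=I_{2}^{[p^{e}]}\ltimes I_{2}^{[p^{e}]}M$.

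Setting $K_{e}:=I_{1}^{\lceil sp^{e}\rceil}+I_{2}^{[p^{e}]}$, one then has $J_{1}^{\lceil sp^{e}\rceil}+J_{2}^{[p^{e}]}=K_{e}\ltimes K_{e}M$, which immediately produces the second equality in the statement after replacing $\ell_{R\ltimes M}$ by $\ell_{R}$ via Lemma \ref{lem:2.2}(3). For the first equality I would observe that as $R$-modules
$$
(R\ltimes M)\big/(K_{e}\ltimes K_{e}M)\;\cong\;R/K_{e}\,\oplus\,M/K_{e}M,
$$
because the equivalence relation on ordered pairs splits coordinate-wise. Length is therefore additive across this decomposition, and dividing by $p^{ed}$ — where $d=\text{dim}(R\ltimes M)=\text{dim}\,R$ by Lemma \ref{lem:2.2}(2), so that all three $h$-functions use the same normalizing exponent — and letting $e\to\infty$ delivers $h_{s}(I_{1},I_{2};R)+h_{s}(I_{1},I_{2};M)$.

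The main obstacle is the identification $J_{2}^{[p^{e}]}=I_{2}^{[p^{e}]}\ltimes I_{2}^{[p^{e}]}M$: one must verify carefully that taking $R\ltimes M$-linear combinations of the $p^{e}$-th powers of \emph{diagonal} elements $(r,m)$, whose second coordinates are individually zero, nevertheless recovers the entire module component $I_{2}^{[p^{e}]}M$ in the second coordinate. This step is what produces the extra summand $h_{s}(I_{1},I_{2};M)$, and relies on two characteristic-$p$ ingredients — the vanishing $p^{e}r^{p^{e}-1}m=0$ and the fact that the Frobenius power is generated as an ideal by $p^{e}$-th powers — together with the square-zero structure of $M$ inside $R\ltimes M$. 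Once this identification is in hand, the remainder of the argument is simply length additivity over split decompositions and dimension bookkeeping.
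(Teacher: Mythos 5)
Your proof is correct and follows essentially the same route as the paper: both arguments rest on identifying $J_{1}^{\lceil sp^{e}\rceil}+J_{2}^{[p^{e}]}$ with $(I_{1}^{\lceil sp^{e}\rceil}+I_{2}^{[p^{e}]})\ltimes(I_{1}^{\lceil sp^{e}\rceil}+I_{2}^{[p^{e}]})M$ and then exploiting the $R$-module decomposition of $R\ltimes M$ into $R$ and $M$, with the normalization $\dim(R\ltimes M)=\dim R$. The only differences are cosmetic: you verify $J_{2}^{[p^{e}]}=I_{2}^{[p^{e}]}\ltimes I_{2}^{[p^{e}]}M$ directly via the square-zero, characteristic-$p$ computation where the paper cites \cite[Lemma 4.1]{VPJ}, and you conclude by length additivity on the split quotient $R/K_{e}\oplus M/K_{e}M$ where the paper invokes additivity of the $h$-function over the exact sequence $0\to R\to R\ltimes M\to M\to 0$ together with \cite[Theorem 3.1]{DM}.
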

\begin{proof}By assumption, $I_{1}$ and $I_{2}$ are $\mathfrak{m}$-primary ideals of  $R $.  Then $J_{1}=I_{1}\ltimes I_{1}M$ and $J_{2}=I_{2}\ltimes I_{2}M$ are  $\mathfrak{m} \ltimes M$-primary ideals of  $R\ltimes M $ by \cite[Theorem 25.2]{H}. By Lemma \ref{lem:2.3}(1), there is a  short exact sequence of $R $-modules

$$0 \longrightarrow R  \xrightarrow{ } R\ltimes M  \xrightarrow{ } M  \longrightarrow 0,$$
 by the additivity of the $h$-function in the short exact sequences in Lemma \ref{lem:2.3}(2), we get
 $$ h_{s}(I_{1},I_{2};R\ltimes M)=h_{s}(I_{1},I_{2};R ) +h_{s}(I_{1},I_{2};M ).$$
 By Lemma \ref{lem:2.3}(3), $l_{R}(N) = l_{R\ltimes M}(N)$ for each $R\ltimes M$-module $N$. Using \cite[Lemma 4.1]{VPJ}, we have
$J_{2}^{[p^{e}]} = I_{2}^{[p^{e}]}(R\ltimes M)$ and by Lemma \ref{lem:4.1}, we obtain $J_{1}^{\lceil sp^{e}\rceil} = I_{1}^{\lceil sp^{e}\rceil}(R\ltimes M)$. Then $J_{2}^{[p^{e}]}+ J_{1}^{\lceil sp^{e}\rceil}= (I_{2}^{[p^{e}]}+I_{1}^{\lceil sp^{e}\rceil})(R\ltimes M)$. Therefore, we obtain
 \begin{center}
$\begin{aligned}
h_{s}(J_{1},J_{2};R\ltimes M)
&=\mathop{\text{lim}}\limits_{ e\rightarrow \infty }\frac{\ell_{R\ltimes M}(R\ltimes M/(J_{1}^{\lceil sp^{e}\rceil}+J_{2}^{[p^{e}]}))}{p^{ed}}\\
&=\mathop{\text{lim}}\limits_{ e\rightarrow \infty }\frac{\ell_{R}(R\ltimes M/(I_{1}^{\lceil sp^{e}\rceil}+I_{2}^{[p^{e}]})R\ltimes M)}{p^{ed}}\\
&=h_{s}(I_{1},I_{2};R\ltimes M)\\
&=h_{s}(I_{1},I_{2};R ) +h_{s}(I_{1},I_{2};M ).
\end{aligned}$
\end{center}
 By  \cite[Theorem 3.1]{DM}, there is an isomorphism $$\frac{R\ltimes M}{(I_{1}^{\lceil sp^{e}\rceil}+I_{2}^{[p^{e}]})\ltimes (I_{1}^{\lceil sp^{e}\rceil}+I_{2}^{[p^{e}]}) M}\cong\frac{R}{(I_{1}^{\lceil sp^{e}\rceil}+I_{2}^{[p^{e}]})} \ltimes \frac{M}{(I_{1}^{\lceil sp^{e}\rceil}+I_{2}^{[p^{e}]})M},$$
 again, by \cite[Theorem 3.1]{DM} and Lemma \ref{lem:2.3}(1), we have the following short exact sequence$$0 \longrightarrow \frac{R}{(I_{1}^{\lceil sp^{e}\rceil}+I_{2}^{[p^{e}]})}  \xrightarrow{ } \frac{R\ltimes M}{(I_{1}^{\lceil sp^{e}\rceil}+I_{2}^{[p^{e}]})\ltimes (I_{1}^{\lceil sp^{e}\rceil}+I_{2}^{[p^{e}]}) M}  \xrightarrow{ } \frac{M}{(I_{1}^{\lceil sp^{e}\rceil}+I_{2}^{[p^{e}]})M}  \longrightarrow 0.$$
 By length additivity in the short exact sequences,  we get
 $$\ell_{R}(\frac{R\ltimes M}{(I_{1}^{\lceil sp^{e}\rceil}+I_{2}^{[p^{e}]})\ltimes (I_{1}^{\lceil sp^{e}\rceil}+I_{2}^{[p^{e}]}) M})= \ell_{R}(\frac{R}{(I_{1}^{\lceil sp^{e}\rceil}+I_{2}^{[p^{e}]})})+\ell_{R}(\frac{M}{(I_{1}^{\lceil sp^{e}\rceil}+I_{2}^{[p^{e}]})M} ).$$
 Therefore, dividing both sides by $ p^{ed}$ and taking the limit as $e$ goes to infinity gives us
 $$ \mathop{\text{lim}}\limits_{ e\rightarrow \infty }\frac{\ell_{R}(R\ltimes M/((I_{1}^{\lceil sp^{e}\rceil}+I_{2}^{[p^{e}]})\ltimes (I_{1}^{\lceil sp^{e}\rceil}+I_{2}^{[p^{e}]}) M))}{p^{ed}}=h_{s}(I_{1},I_{2};R ) +h_{s}(I_{1},I_{2};M ).$$This completes the proof.
\end{proof}

\begin{cor}\label{cor:4.3}{\it{Let  $(R, \mathfrak{m})$  be a   local ring of prime characteristic $p>0$, and let $M $ be a finitely generated $R$-module. Then
$$h_{s}(R \ltimes M)\leq h_{s}(\mathfrak{m} \ltimes \mathfrak{m}R)=h_{s}(R ) +h_{s}(M ).$$ In particular, if $\text{dim}(M)<\text{dim}(R)$, then $$h_{s}(R \ltimes M)\leq h_{s}(\mathfrak{m} \ltimes \mathfrak{m}R)= h_{s}(R ).$$
}} \end{cor}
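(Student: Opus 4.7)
The plan is to specialize Theorem~\ref{thm:4.2} and combine it with an elementary monotonicity of the $h$-function in its ideal arguments. Applying Theorem~\ref{thm:4.2} with $I_1 = I_2 = \mathfrak{m}$ produces $J_1 = J_2 = \mathfrak{m} \ltimes \mathfrak{m}M$, and the theorem yields directly
\[
h_s(\mathfrak{m} \ltimes \mathfrak{m}M) \;=\; h_s(\mathfrak{m};R) + h_s(\mathfrak{m};M) \;=\; h_s(R) + h_s(M),
\]
which is the asserted equality.

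To prove the inequality $h_s(R \ltimes M) \le h_s(\mathfrak{m} \ltimes \mathfrak{m}M)$, I will establish a general monotonicity of the $h$-function: whenever $K \subseteq L$ are $\mathfrak{n}$-primary ideals in a local ring with maximal ideal $\mathfrak{n}$, the containments $K^{\lceil sp^e \rceil} \subseteq L^{\lceil sp^e \rceil}$ and $K^{[p^e]} \subseteq L^{[p^e]}$ induce a surjection on the quotients, which after comparing lengths and dividing by $p^{ed}$ yields $h_s(L) \le h_s(K)$ in the limit. In the present setting, Lemma~\ref{lem:2.2}(2) identifies the maximal ideal of $R \ltimes M$ as $\mathfrak{n} := \mathfrak{m} \ltimes M$, and a direct computation gives $\mathfrak{n}^2 = \mathfrak{m}^2 \ltimes \mathfrak{m}M \subseteq \mathfrak{m} \ltimes \mathfrak{m}M \subseteq \mathfrak{n}$, so $\mathfrak{m} \ltimes \mathfrak{m}M$ is $\mathfrak{n}$-primary and the monotonicity applies to $K = \mathfrak{m} \ltimes \mathfrak{m}M$ and $L = \mathfrak{n}$, delivering the desired bound.

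Finally, for the ``in particular'' clause, Lemma~\ref{lem:2.2}(2) gives $\dim(R \ltimes M) = \dim R$, so the hypothesis $\dim M < \dim R$ together with Lemma~\ref{lem:2.3}(1) forces $h_s(M) = h_s(\mathfrak{m};M) = 0$; substituting into the equality from the first paragraph collapses it to $h_s(\mathfrak{m} \ltimes \mathfrak{m}M) = h_s(R)$, completing the proof. No substantive obstacle arises: Theorem~\ref{thm:4.2} does the essential work, and the only supplementary ingredients are the routine monotonicity of $h_s$ in its ideal arguments and the vanishing in Lemma~\ref{lem:2.3}(1); the sole point meriting care is the verification that $\mathfrak{m} \ltimes \mathfrak{m}M$ is genuinely $\mathfrak{n}$-primary so that both sides of the inequality are legitimately defined.
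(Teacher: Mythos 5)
Your proposal is correct and follows essentially the same route as the paper: the equality comes from Theorem \ref{thm:4.2} with $I_1=I_2=\mathfrak{m}$, the inequality from monotonicity of the $h$-function under the inclusion $\mathfrak{m}\ltimes\mathfrak{m}M\subseteq\mathfrak{m}\ltimes M$, and the vanishing $h_s(M)=0$ from Lemma \ref{lem:2.3}(1) when $\dim M<\dim R$. The only difference is that where the paper simply cites \cite[Proposition 2.6]{T} for the monotonicity, you prove it directly from the surjection of quotients and also verify that $\mathfrak{m}\ltimes\mathfrak{m}M$ is primary for the maximal ideal $\mathfrak{m}\ltimes M$, both of which are correct and make the argument more self-contained.
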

\begin{proof} By \cite[Proposition 2.6]{T}, the first inequality follows immediately from the inclusion   $\mathfrak{m} \ltimes \mathfrak{m}R\subseteq\mathfrak{m} \ltimes R$.  Now, taking
$I_{1}=I_{2}=\mathfrak{m}$, the result follows directly from Theorem \ref{thm:4.2}. This completes the proof.
\end{proof}

\begin{cor}\label{cor:4.4}{\it{Let  $(R, \mathfrak{m})$  be a   local ring of prime characteristic $p>0$ and  $M $  a finitely generated $R$-module. Then
$$e_{s}(R \ltimes M)\leq e_{s}(\mathfrak{m} \ltimes \mathfrak{m}R)=e_{s}(R ) +e_{s}(M ).$$ In particular, if $\text{dim}(M)<\text{dim}(R)$, then $$e_{s}(R \ltimes M)\leq e_{s}(R ) .$$}} \end{cor}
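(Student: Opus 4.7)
The plan is to deduce this corollary directly from Corollary \ref{cor:4.3} by dividing through by the dimension-dependent normalizing factor $\mathcal{H}_s(d)$. By Lemma \ref{lem:2.2}(2), the idealization $R\ltimes M$ is a local ring of the same prime characteristic $p>0$ and $\dim(R\ltimes M)=\dim(R)$, so the normalizing constant appearing in the definition of the $s$-multiplicity is the same for $R\ltimes M$, for $R$, and (since $e_s(M)$ is defined with respect to $\dim R$, the ambient dimension) for $M$ viewed as an $R$-module. Thus dividing each term of the equality and inequality in Corollary \ref{cor:4.3} by $\mathcal{H}_s(d)$ with $d=\dim(R)=\dim(R\ltimes M)$ yields
\[
e_{s}(R \ltimes M)\ \leq\ e_{s}(\mathfrak{m} \ltimes \mathfrak{m}R)\ =\ e_{s}(R)+e_{s}(M),
\]
which is the first assertion.

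For the ``in particular'' part I would invoke Lemma \ref{lem:2.3}(3): if $\dim(M)<\dim(R)$, then $e_{s}(\mathfrak{m};M)=0$, so the formula just obtained collapses to $e_{s}(R\ltimes M)\leq e_{s}(\mathfrak{m}\ltimes\mathfrak{m}R)=e_{s}(R)$, giving the desired bound.

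No step is really an obstacle here, since Corollary \ref{cor:4.3} already does the substantive work at the level of the $h$-function; the only things to check are that the dimension of the idealization agrees with that of $R$ (Lemma \ref{lem:2.2}(2)) and that the vanishing of $h_s$ for lower-dimensional modules (Lemma \ref{lem:2.3}(1),(3)) transfers to vanishing of $e_s$. Both are immediate, so the proof reduces to the observation that $\mathcal{H}_s(d)$ is a common positive scaling factor.
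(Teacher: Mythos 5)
Your proposal is correct and follows essentially the same route as the paper: both deduce the statement from Corollary \ref{cor:4.3} by dividing by the common normalizing factor $\mathcal{H}_s(d)$, using $\dim(R\ltimes M)=\dim(R)$ (Lemma \ref{lem:2.2}(2)) and the vanishing of $e_s$ for modules of smaller dimension. The only difference is cosmetic: you invoke Lemma \ref{lem:2.3}(3) explicitly for the ``in particular'' part, whereas the paper absorbs that step into Corollary \ref{cor:4.3} (and, incidentally, your citation of Lemma \ref{lem:2.2}(2) corrects the paper's misprinted reference to Lemma \ref{lem:2.3}(2)).
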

\begin{proof}
 It follows immediately from Lemma \ref{lem:2.3}(2) that $\text{dim}(R \ltimes M)=\text{dim}(R)$, and since  value $\mathcal{H}_{s}(d)$ depends only on the dimension $d=\text{dim}(R)$ and the parameter $s$,
 the result is a direct consequence of Corollary \ref{cor:4.3}.
\end{proof}

 Next, we investigate the relationships between the $h$-function and several numerical invariants of $M$   in the idealization ring $R \ltimes M$. To this end, we first prove a lemma that will be essential in the proof. Denote $ \text{Assh}R=\{\mathfrak{p}  \in \text{Spec}(R)\mid \text{dim}R=\text{dim}R/\mathfrak{p}\}$.

\begin{lem}\label{lem:4.5}{\it{Let  $(  R,\mathfrak{m})$ be a
  local ring of prime characteristic $p>0$, and let $I$ and $J$ be $\mathfrak{m}$-primary ideals of  $R $. Suppose $M $ is a finitely generated $R$-module such that $M_{\mathfrak{p}} $ is free of constant rank $r$ for any $\mathfrak{p} \in \text{Assh} R$.  If $R$ is a domain,  then  $h_{s}(I,J;M)=h_{s}(I,J;R)\cdot r$. }} \end{lem}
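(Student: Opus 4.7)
The plan is to reduce to the case of $R$ itself by producing a short exact sequence comparing $M$ with a free module of rank $r$, and then to apply the additivity and vanishing properties of the $h$-function recorded in Lemma \ref{lem:2.3}. Since $R$ is a local domain, $\text{Assh}(R)$ consists only of the zero ideal, so the hypothesis forces $M \otimes_R K$ to be a $K$-vector space of dimension $r$, where $K = \text{Frac}(R)$. I would choose elements $m_1, \ldots, m_r \in M$ whose images form a $K$-basis of $M \otimes_R K$ and consider the resulting map $\varphi : R^r \to M$; since $R^r$ is torsion-free over the domain $R$ and $\varphi$ localizes to an isomorphism at $(0)$, the map $\varphi$ is injective. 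Setting $N = \operatorname{coker} \varphi$ then yields a short exact sequence
$$0 \longrightarrow R^r \longrightarrow M \longrightarrow N \longrightarrow 0.$$

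Next I would verify $\text{dim}(N) < \text{dim}(R)$. By construction $N \otimes_R K = 0$, so $N$ is a finitely generated torsion module over the domain $R$; hence there exists a nonzero $x \in R$ annihilating $N$, so $N$ carries the structure of an $R/(x)$-module, and Krull's principal ideal theorem gives $\text{dim}(N) \leq \text{dim}(R/(x)) < \text{dim}(R)$. With this in hand, Lemma \ref{lem:2.3}(1) yields $h_s(I,J;N) = 0$, and the additivity of $h_s$ from Lemma \ref{lem:2.3}(2), applied to the sequence above, gives $h_s(I,J;M) = h_s(I,J;R^r)$.

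Finally, splitting $R^r$ inductively via the short exact sequences $0 \to R \to R^k \to R^{k-1} \to 0$ --- or, equivalently, observing directly that $\ell_R(R^r / (I^{\lceil sp^e \rceil} + J^{[p^e]})R^r) = r \cdot \ell_R(R/(I^{\lceil sp^e \rceil} + J^{[p^e]}))$ --- gives $h_s(I,J;R^r) = r \cdot h_s(I,J;R)$, which combined with the previous identity completes the argument. The only real subtlety is the construction of the short exact sequence $0 \to R^r \to M \to N \to 0$ with $\text{dim}(N) < \text{dim}(R)$; the domain hypothesis is essential here, since it ensures both that $R^r$ injects into $M$ (no torsion obstruction from embedded primes) and that quotienting by a single nonzero element drops the dimension.
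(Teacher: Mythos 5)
Your argument is correct, but it proceeds differently from the paper. The paper's proof is a one-step application of Taylor's associativity formula for the $h$-function (\cite[Theorem 2.9]{T}): since $R$ is a domain, $\text{Assh}(R)=\{(0)\}$, so the formula collapses to $h_{s}(I,J;M)=h_{s}(I,J;R)\cdot \ell_{R_{(0)}}(M_{(0)})=h_{s}(I,J;R)\cdot r$. You instead avoid the associativity formula entirely: you exploit the same observation that the hypothesis only constrains $M\otimes_R K$ (with $K=\operatorname{Frac}(R)$), build an injection $R^{r}\hookrightarrow M$ from elements mapping to a $K$-basis (injectivity being exactly as you say, since the kernel is a torsion submodule of the torsion-free module $R^{r}$), note that the cokernel $N$ is torsion, hence killed by a nonzero $x$ and of dimension at most $\dim R/(x)<\dim R$, and then finish with the vanishing and additivity statements of Lemma \ref{lem:2.3}(1)--(2) together with $h_{s}(I,J;R^{r})=r\,h_{s}(I,J;R)$. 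Both routes are valid; the paper's is shorter but leans on the external associativity theorem, while yours is more elementary and self-contained, using only the properties of $h_{s}$ already recorded in Section 2 plus standard facts about torsion modules over a domain. One small remark: your construction makes transparent why the domain hypothesis (equivalently, $\text{Assh}(R)=\{(0)\}$ and absence of torsion obstructions) is what drives the reduction, whereas in the paper this is hidden inside the associativity formula, whose displayed form there is in fact slightly garbled (the summands should involve $R/\mathfrak{p}$, not $M$).
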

\begin{proof}By the associativity formula for the $h$-function (see \cite[Theorem 2.9]{T}), we have

\begin{center}
$\begin{aligned}
h_{s}(I,J;M)&=\mathop{\sum}\limits_{\mathfrak{p} \in \text{Assh}R}h_{s}^{R/\mathfrak{p}}(I(R/\mathfrak{p}),J(R/\mathfrak{p});M)
l_{R_{\mathfrak{p}}}(M_{\mathfrak{p}})\\
&=\mathop{\sum}\limits_{\mathfrak{p} \in \text{Assh}}h_{s}^{R/\mathfrak{p}}(I(R/\mathfrak{p}),J(R/\mathfrak{p});M)
\cdot r\\
&=h_{s}(I,J;R)\cdot r.
\end{aligned}$
\end{center}This completes the proof.
\end{proof}

\begin{prop}\label{prop:4.6}{\it{Let  $(R, \mathfrak{m})$  be a   local ring of prime characteristic $p>0$ and $M $  a finitely generated $R$-module. If $I_{1}$ and $I_{2}$ are $\mathfrak{m}$-primary ideals of  $R $ and $J_{1}=I_{1}\ltimes I_{1}M$, $J_{2}=I_{2}\ltimes I_{2}M$, then

 $\mathrm{(1)}$  $h_{s}(J_{1},J_{2};R \ltimes M)\leq (1+\mu(M)) h_{s}(I_{1},I_{2};R ) $, where $\mu(M)$ denotes the minimal number of generators of $M $.

 $\mathrm{(2)}$  If  $M $  has finite projective dimension $ n $, then  $$h_{s}(J_{1},J_{2};R \ltimes M) =(\mathop{\sum}\limits_{ i=0 }^{n}(-1)^{i}\beta_{i}+1)h_{s}(I_{1},I_{2};R ), $$ where $\beta_{i}$ is $i $-th Betti number  of $M $.

 $\mathrm{(3)}$  If $M_{\mathfrak{p}} $ is free of
constant rank $r$ for every $\mathfrak{p} \in \text{Assh} R$ and  $R$ is a domain, then   $h_{s}(J_{1},J_{2};R \ltimes M) =(r+1)h_{s}(I_{1},I_{2};R ) $.}} \end{prop}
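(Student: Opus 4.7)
The plan is to reduce all three parts to Theorem \ref{thm:4.2}, which already gives the clean decomposition
\[
h_{s}(J_{1},J_{2};R\ltimes M) \;=\; h_{s}(I_{1},I_{2};R) \;+\; h_{s}(I_{1},I_{2};M).
\]
So the whole problem collapses to producing the right expression (bound or equality) for $h_{s}(I_{1},I_{2};M)$ in terms of $h_{s}(I_{1},I_{2};R)$ under each of the three hypotheses, and then adding the extra $h_{s}(I_{1},I_{2};R)$ coming from Theorem \ref{thm:4.2}.

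For part (1), I would choose a minimal generating set and form the short exact sequence $0\to K\to R^{\mu(M)}\to M\to 0$. Applying additivity of the $h$-function (Lemma \ref{lem:2.3}(2)) gives
\[
h_{s}(I_{1},I_{2};M) \;=\; \mu(M)\,h_{s}(I_{1},I_{2};R) \;-\; h_{s}(I_{1},I_{2};K).
\]
Since $h_{s}(I_{1},I_{2};K)\geq 0$ (the defining limit is of nonnegative lengths), we get $h_{s}(I_{1},I_{2};M)\leq \mu(M)\,h_{s}(I_{1},I_{2};R)$, and adding the term from Theorem \ref{thm:4.2} yields the desired inequality $(1+\mu(M))\,h_{s}(I_{1},I_{2};R)$.

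For part (2), I would take a minimal finite free resolution
\[
0\to R^{\beta_{n}}\to R^{\beta_{n-1}}\to\cdots\to R^{\beta_{0}}\to M\to 0,
\]
split it into short exact sequences via the syzygies $M_{j}$, and iterate the additivity of $h_{s}$ on each piece. The telescoping sum collapses to
\[
h_{s}(I_{1},I_{2};M) \;=\; \Bigl(\sum_{i=0}^{n}(-1)^{i}\beta_{i}\Bigr) h_{s}(I_{1},I_{2};R),
\]
and adding the extra $h_{s}(I_{1},I_{2};R)$ from Theorem \ref{thm:4.2} gives the claimed equality. Part (3) is even shorter: Lemma \ref{lem:4.5} supplies $h_{s}(I_{1},I_{2};M)=r\cdot h_{s}(I_{1},I_{2};R)$ directly, and one more application of Theorem \ref{thm:4.2} produces the factor $(r+1)$.

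There is no real obstacle: Theorem \ref{thm:4.2} does all the heavy lifting, and the remaining work is just the standard manipulation of short exact sequences under an additive invariant. The only point that needs a brief justification is the nonnegativity of $h_{s}$ used in (1), and the correct bookkeeping of signs and Betti numbers in the telescoping argument for (2); the latter should be written out by induction on $n$ (or by induction on the length of the resolution) to avoid off-by-one errors.
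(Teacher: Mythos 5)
Your proposal is correct and follows essentially the same route as the paper: part (1) via the sequence $0\to K\to R^{\mu(M)}\to M\to 0$ and nonnegativity of $h_s$, part (2) by breaking the minimal free resolution into short exact sequences and telescoping with the additivity from Lemma \ref{lem:2.3}(2), and part (3) via Lemma \ref{lem:4.5}, each time adding the extra $h_{s}(I_{1},I_{2};R)$ supplied by Theorem \ref{thm:4.2}. Your remark that the telescoping in (2) should be spelled out by induction is a reasonable refinement of the paper's terser appeal to additivity, but it is not a different argument.
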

\begin{proof}$\mathrm{(1)}$ Consider the following short exact sequence
$$0 \longrightarrow K  \xrightarrow{ } R^{\mu(M)}  \xrightarrow{ } M  \longrightarrow 0,$$by the additivity of the $h$-function in the short exact sequences in Lemma \ref{lem:2.3}(2), we get
 $$h_{s}(I_{1},I_{2};R^{\mu(M)})=\mu(M)h_{s}(I_{1},I_{2};R)= h_{s}(I_{1},I_{2};K )+h_{s}(I_{1},I_{2};M ) .$$Therefore $$\mu(M)h_{s}(I_{1},I_{2};R)
 \geq h_{s}(I_{1},I_{2};M ) .$$Using Theorem \ref{thm:4.2}, we have $$ h_{s}(J_{1},J_{2};R \ltimes M)=h_{s}(I_{1},I_{2};R ) +h_{s}(I_{1},I_{2};M )\leq(\mu(M)+1)h_{s}(I_{1},I_{2};R).$$

 $\mathrm{(2)}$  Since $M $  has finite projective dimension $ n $, take a minimal free resolution of $M$$$\xymatrix@C=1.5cm{ 0\ar[r]^{} & R^{\beta_{n}}\ar[r]^{}  &\cdots
\ar[r]^{} & R^{\beta_{1}}  \ar[r]^{}& R^{\beta_{0}} \ar[r]^{}& M\ar[r]^{}&0},$$ by the additivity of the $h$-function in the short exact sequences in Lemma \ref{lem:2.3}(2), we get $$h_{s}(I_{1},I_{2};M )=\mathop{\sum}\limits_{ i=0 }^{n}(-1)^{i}\beta_{i}h_{s}(I_{1},I_{2};R ).$$Using Theorem \ref{thm:4.2}, we have
$$h_{s}(J_{1},J_{2};R \ltimes M)
=(\mathop{\sum}\limits_{ i=0 }^{n}(-1)^{i}\beta_{i}+1)h_{s}(I_{1},I_{2};R ).$$

$\mathrm{(3)}$ By  Lemma \ref{lem:4.5}, we have    $h_{s}(I_{1},I_{2};M)= h_{s}(I_{1},I_{2};R)\cdot r$.  Therefore, using Theorem \ref{thm:4.2}, we have $$ h_{s}(J_{1},J_{2};R \ltimes M)=h_{s}(I_{1},I_{2};R ) +h_{s}(I,J;R)\cdot r=(r+1)h_{s}(I_{1},I_{2};R).$$
 This completes the proof.\end{proof}
It is well known that for an ideal $I $  generated by a system of parameters in  $R $, the  (in)equality always holds $$e(I)=e_{HK}(I)\leq \ell_{R}(R/I).$$ To generalize this result, we further apply it to the idealization ring, leading to the following conclusion.

\begin{lem}\label{lem:4.7}{\it{Let  $(R, \mathfrak{m})$  be a   local ring of prime characteristic $p>0$, and let $M $ be a finitely generated $R$-module. If $I_{1}$ and $I_{2}$ are $\mathfrak{m}$-primary ideals of  $R $ and $J_{1}=I_{1}\ltimes I_{1}M$, $J_{2}=I_{2}\ltimes I_{2}M$, then following holds: $$ e_{s}(J_{1},J_{2};R \ltimes M)=e_{s}(I_{1},I_{2};R ) +e_{s}(I_{1},I_{2};M ).$$}} \end{lem}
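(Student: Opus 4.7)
The plan is to deduce Lemma 4.7 directly from Theorem 4.2 by dividing through by the common normalizing factor $\mathcal{H}_s(d)$. The key point is that the $s$-multiplicity is obtained from the $h$-function by dividing by $\mathcal{H}_s(d)$, where $d$ is the dimension of the ambient ring, and in our situation the ambient dimensions coincide.

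First I would record the dimension equality. By Lemma \ref{lem:2.2}(2) we have $\dim(R \ltimes M) = \dim(R)$, so setting $d := \dim(R) = \dim(R \ltimes M)$, the normalizing factor $\mathcal{H}_s(d)$ is the same in each of the three $s$-multiplicities appearing in the statement: $e_s(J_1,J_2;R \ltimes M)$ is formed using $\dim(R \ltimes M) = d$, while $e_s(I_1,I_2;R)$ and $e_s(I_1,I_2;M)$ are both formed using $\dim(R) = d$.

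Next, by Theorem \ref{thm:4.2} we have the $h$-function additivity
\begin{equation*}
h_s(J_1,J_2; R \ltimes M) = h_s(I_1,I_2; R) + h_s(I_1,I_2; M).
\end{equation*}
Dividing both sides by $\mathcal{H}_s(d)$ and using the definition of the $s$-multiplicity yields
\begin{equation*}
e_s(J_1,J_2;R\ltimes M) = \frac{h_s(J_1,J_2;R \ltimes M)}{\mathcal{H}_s(d)} = \frac{h_s(I_1,I_2;R)}{\mathcal{H}_s(d)} + \frac{h_s(I_1,I_2;M)}{\mathcal{H}_s(d)} = e_s(I_1,I_2;R) + e_s(I_1,I_2;M),
\end{equation*}
which is exactly the claimed identity.

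There is essentially no obstacle here; the only thing to watch is to confirm that Theorem \ref{thm:4.2} applies (which requires that $J_1, J_2$ be $(\mathfrak{m} \ltimes M)$-primary, already verified in its proof via \cite[Theorem 25.2]{H}) and that all three $s$-multiplicities are normalized by the same $\mathcal{H}_s(d)$, which follows from Lemma \ref{lem:2.2}(2). The proof is therefore a short verification, mirroring the derivation of Corollary \ref{cor:4.4} from Corollary \ref{cor:4.3}.
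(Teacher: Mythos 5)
Your proof is correct and follows essentially the same route as the paper: it invokes the dimension equality $\dim(R \ltimes M)=\dim(R)$ so that the normalizing factor $\mathcal{H}_s(d)$ is common to all three terms, then divides the $h$-function identity of Theorem \ref{thm:4.2} by $\mathcal{H}_s(d)$. Your version is in fact slightly more careful, since it cites the dimension fact from Lemma \ref{lem:2.2}(2) rather than the paper's (mis)reference to Lemma \ref{lem:2.3}(2).
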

\begin{proof}
 It follows immediately from Lemma \ref{lem:2.3}(2) that $\text{dim}(R \ltimes M)=\text{dim}(R)$, and since  value $\mathcal{H}_{s}(d)$ depends only on the dimension $d=\text{dim}(R)$ and the parameter $s$,
 the result is a direct consequence of Theorem \ref{thm:4.2}.
\end{proof}

\begin{prop}\label{prop:4.8}{\it{Let  $(R, \mathfrak{m})$  be a   local ring of prime characteristic $p>0$ of dimension $d$, and let $M $ be a finitely generated $R$-module of dimension less than $d$. If $I$ is an  ideal generated by a system of parameters in  $R $, and let $J=I\ltimes IM$,  then following holds:$$ e_{s}(J;R \ltimes M) \leq \ell_{R \ltimes M}(\frac{R \ltimes M}{J} ).$$}} \end{prop}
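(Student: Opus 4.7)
The plan is to reduce the inequality on $R\ltimes M$ to a parameter-ideal bound on $R$ itself via additivity of the $s$-multiplicity, and then establish that bound by a filtration-and-counting argument. Applying Lemma~\ref{lem:4.7} with $I_1=I_2=I$ (so $J_1=J_2=J$) gives $e_s(J;R\ltimes M)=e_s(I;R)+e_s(I;M)$, and since $\dim M<d=\dim R$, Lemma~\ref{lem:2.3}(3) forces $e_s(I;M)=0$, so the left-hand side collapses to $e_s(I;R)$. For the right-hand side, \cite[Theorem~3.1]{DM} gives the isomorphism $(R\ltimes M)/J\cong(R/I)\ltimes(M/IM)$; combining Lemma~\ref{lem:2.2}(1) (applied to the pair $R/I$, $M/IM$) with Lemma~\ref{lem:2.2}(3) and length additivity yields
\[
\ell_{R\ltimes M}(R\ltimes M/J)=\ell_R(R/I)+\ell_R(M/IM)\geq\ell_R(R/I).
\]
Thus the proposition reduces to the $s$-multiplicity analog of the classical Lech bound recalled by the authors just before the statement: $e_s(I;R)\leq\ell_R(R/I)$ for any parameter ideal $I$.

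To establish this reduced inequality, I would write $I=(x_1,\dots,x_d)$ and filter $R/(I^{\lceil sp^e\rceil}+I^{[p^e]})$ by the images of the powers of $I$. Each graded piece $I^k/I^{k+1}$ modulo $I^{\lceil sp^e\rceil}+I^{[p^e]}$ is annihilated by $I$, hence is a quotient of a free $R/I$-module generated by the images of the monomials $x^{\alpha}$ with $|\alpha|=k$ and each $\alpha_i<p^e$, since any monomial with some $\alpha_i\geq p^e$ already lies in $I^{[p^e]}$. Summing over $k<\lceil sp^e\rceil$,
\[
\ell_R\bigl(R/(I^{\lceil sp^e\rceil}+I^{[p^e]})\bigr)\leq \ell_R(R/I)\cdot\#\bigl\{\alpha\in\mathbb{N}^d:|\alpha|<\lceil sp^e\rceil,\ \alpha_i<p^e\ \forall i\bigr\}.
\]
Dividing by $p^{ed}$ and letting $e\to\infty$, the normalized lattice-point count is a Riemann sum for the volume of $\{t_1+\cdots+t_d<s\}\cap[0,1]^d$, which equals $\mathcal{H}_s(d)$. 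Hence $h_s(I;R)\leq\mathcal{H}_s(d)\cdot\ell_R(R/I)$, i.e.\ $e_s(I;R)\leq\ell_R(R/I)$, and chaining with the earlier reductions gives the proposition.

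The main obstacle is precisely this last step. Its endpoints $s\to 0^{+}$ and $s\to\infty$ recover the classical bounds $e(I)\leq\ell_R(R/I)$ and $e_{HK}(I)\leq\ell_R(R/I)$, but the intermediate range requires the filtration-by-powers strategy above together with the identification of the lattice-point limit as $\mathcal{H}_s(d)$. Some care is needed in verifying that the ceiling and strict inequality in the count produce exactly $\mathcal{H}_s(d)$ in the limit, and that each filtration quotient really is generated (as $R/I$-module) by the asserted monomial residues.
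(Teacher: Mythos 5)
Your proposal is correct, and its skeleton coincides with the paper's proof: Lemma \ref{lem:4.7} with $I_1=I_2=I$ plus the vanishing $e_s(I;M)=0$ from Lemma \ref{lem:2.3}(3) collapses the left-hand side to $e_s(I;R)$, while the right-hand side is bounded below by $\ell_R(R/I)$ using the isomorphism $(R\ltimes M)/J\cong (R/I)\ltimes (M/IM)$ from \cite[Theorem 3.1]{DM}, the exact sequence of Lemma \ref{lem:2.2}(1), and Lemma \ref{lem:2.2}(3). The one genuine divergence is the key input $e_s(I;R)\leq \ell_R(R/I)$ for a parameter ideal: the paper simply cites \cite[Corollary 3.13]{T}, whereas you prove it directly by filtering $R/(I^{\lceil sp^e\rceil}+I^{[p^e]})$ by powers of $I$, bounding each graded piece by $\ell_R(R/I)$ times the number of monomials $x^{\alpha}$ with $|\alpha|=k$ and all $\alpha_i<p^e$ (monomials with some exponent at least $p^e$ already lie in $I^{[p^e]}$, and each piece is killed by $I$), and then identifying the normalized lattice-point count with the volume of $\{t\in[0,1]^d:\ t_1+\cdots+t_d\leq s\}$, which is indeed $\mathcal{H}_s(d)$ (Irwin--Hall/inclusion--exclusion), the boundary hyperplane having measure zero so the Riemann-sum limit is legitimate. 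So your argument is self-contained where the paper's is citation-based; it essentially reproves Taylor's colength bound, at the cost of extra length, and both routes yield the proposition.
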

\begin{proof}
Since  $I$ is an  ideal generated by a system of parameters in  $R $, it follows from \cite[Corollary 3.13]{T} that $ e_{s}(I ) \leq \ell_{R}(R/I)$. Therefore, by Lemmas \ref{lem:4.7} and  \ref{lem:2.3}(3), we obtain
$$ e_{s}(J;R \ltimes M)=e_{s}(I ) \leq \ell_{R}(R/I ).$$Moreover, by \cite[Theorem 3.1]{DM}, there is a isomorphism $$\frac{R \ltimes M}{J}=\frac{R \ltimes M}{I\ltimes IM}\cong\frac{R}{I}\ltimes \frac{M}{IM},$$again, by \cite[Theorem 3.1]{DM} and Lemma \ref{lem:2.2}(1), we have the following short exact sequence
 $$0 \longrightarrow \frac{R}{I}  \xrightarrow{ } \frac{R \ltimes M}{J}  \xrightarrow{ } \frac{M}{IM}  \longrightarrow 0.$$Hence,  we obtain
 $$  \ell_{R}(R/I )\leq \ell_{R}(\frac{R \ltimes M}{J} ) =\ell_{R \ltimes M}(\frac{R \ltimes M}{J} ).$$This completes the proof.
\end{proof}

\bigskip
\section{\bf  Improved Bounds for the  $s$-multiplicity }
As  applications of the above results, in this section, we establish bounds for the $s$-multiplicity in the context of fiber products, idealization rings, and amalgamated duplications.
\subsection{The Taylor-Miller question}
\mbox{}\\

In this subsection, we investigate the Taylor-Miller question in the context of fiber products, idealization rings, and amalgamated duplications, leading to the following result:

\begin{thm}\label{thm:5.1}{\it{Let  $(R, \mathfrak{m})$  be a Cohen-Macaulay   local ring of prime characteristic $p>0$, and let $M $ be a  finitely generated $R$-module.  Then  $$ e_{s}(\mathfrak{m} \ltimes \mathfrak{m}M)\geq 1.$$}} \end{thm}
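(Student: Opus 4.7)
The plan is to obtain this as a quick corollary of Lemma \ref{lem:4.7} combined with the Cohen--Macaulay case of the Taylor--Miller question (Lemma \ref{lem:2.3}(4)).

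First I would apply Lemma \ref{lem:4.7} with $I_{1}=I_{2}=\mathfrak{m}$, so that $J_{1}=J_{2}=\mathfrak{m}\ltimes \mathfrak{m}M$. This yields the key decomposition
\[
e_{s}(\mathfrak{m}\ltimes \mathfrak{m}M)
= e_{s}(\mathfrak{m}\ltimes \mathfrak{m}M;R\ltimes M)
= e_{s}(\mathfrak{m};R)+e_{s}(\mathfrak{m};M)
= e_{s}(R)+e_{s}(M).
\]
Here I am implicitly using that the normalizing factor $\mathcal{H}_{s}(d)$ used on both sides is the same, since $\dim(R\ltimes M)=\dim R=d$ by Lemma \ref{lem:2.2}(2); this is exactly why Lemma \ref{lem:4.7} translates the $h$-function identity of Theorem \ref{thm:4.2} into the identity for $s$-multiplicities above.

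Next I would bound each summand separately. Since $R$ is Cohen--Macaulay of positive characteristic, Lemma \ref{lem:2.3}(4) gives $e_{s}(R)\geq 1$. For the second term, I split on whether $\dim M=\dim R$: if $\dim M<\dim R$, then Lemma \ref{lem:2.3}(3) gives $e_{s}(M)=0$; otherwise $e_{s}(M)=h_{s}(\mathfrak{m};M)/\mathcal{H}_{s}(d)$ is a ratio of a limit of nonnegative lengths divided by the strictly positive quantity $\mathcal{H}_{s}(d)$ (positivity of the normalizer for $s>0$ is standard, as it equals $h_{s}$ of a regular local ring of dimension $d$). Either way, $e_{s}(M)\geq 0$. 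Combining, $e_{s}(\mathfrak{m}\ltimes \mathfrak{m}M)=e_{s}(R)+e_{s}(M)\geq 1+0=1$.

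There is essentially no obstacle in this proof once Theorem \ref{thm:4.2}/Lemma \ref{lem:4.7} is in hand: the content was already absorbed in establishing the additivity formula for the $h$-function over the idealization short exact sequence $0\to R\to R\ltimes M\to M\to 0$. The only small point that needs justification is the nonnegativity of $e_{s}(M)$, which is immediate from the definition as a limit of nonnegative terms.
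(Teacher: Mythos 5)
Your proposal is correct and follows essentially the same route as the paper: the paper's proof invokes Corollary \ref{cor:4.4} (which is exactly the $I_{1}=I_{2}=\mathfrak{m}$ case of Theorem \ref{thm:4.2}, i.e.\ the same identity you extract from Lemma \ref{lem:4.7}) to get $e_{s}(\mathfrak{m}\ltimes\mathfrak{m}M)=e_{s}(R)+e_{s}(M)$, and then applies Lemma \ref{lem:2.3}(4). Your explicit justification that $e_{s}(M)\geq 0$ is a point the paper leaves implicit, but the argument is the same.
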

\begin{proof}By Corollary \ref{cor:4.4} and Lemma \ref{lem:2.3}(4), we obtain $$ e_{s}(\mathfrak{m} \ltimes \mathfrak{m}M)=e_{s}(M )+e_{s}(R ) \geq e_{s}(R )\geq 1.$$This completes the proof.
\end{proof}
The classical result of Taylor and Miller (\cite[Corollary 2.8]{MT}) is recovered by taking
$M $ to be the zero module in the above theorem.

\begin{thm}\label{thm:5.2}{\it{Let $(R, \mathfrak{m}_R, k)$, $(S, \mathfrak{m}_S, k)$ and $(T, \mathfrak{m}_T, k)$ be   local rings of prime characteristic $p>0$. Then the following properties hold for the fiber product  $R \times_TS$:

$\mathrm{(1)}$ If $\text{dim}(R)=\text{dim}(S)=\text{dim}(T)$ and $R$, $T$, $S$ are Cohen-Macaulay, then  $e_{s}(R \times_T S)\geq 1$.

$\mathrm{(2)}$ If $\text{dim}(R)=\text{dim}(S)>\text{dim}(T)$ and $R$, $S$ are Cohen-Macaulay, then   $e_{s}(R \times_T S)\geq 2$.

$\mathrm{(3)}$ If $\text{dim}(R)>\text{dim}(S)>\text{dim}(T)$ and $R $ is Cohen-Macaulay, then   $e_{s}(R \times_T S)\geq 1$.
}} \end{thm}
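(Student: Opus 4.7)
The plan is to reduce all three parts to Corollary \ref{cor:3.5} combined with Lemma \ref{lem:2.3}(4). Parts (2) and (3) will be immediate. In (3), Corollary \ref{cor:3.5} gives $e_s(R \times_T S)=e_s(R)$, which is at least $1$ by Lemma \ref{lem:2.3}(4) since $R$ is Cohen-Macaulay. In (2), Corollary \ref{cor:3.5} gives $e_s(R \times_T S)=e_s(R)+e_s(S)$, and applying Lemma \ref{lem:2.3}(4) to each of the Cohen-Macaulay rings $R$ and $S$ yields the bound $\geq 2$.

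Part (1) is the main case. Here Corollary \ref{cor:3.5} yields $e_s(R \times_T S)=e_s(R)+e_s(S)-e_s(T)$, and since $e_s(S)\geq 1$ by Lemma \ref{lem:2.3}(4), it suffices to establish the comparison $e_s(R)\geq e_s(T)$. I would set $I=\ker(\epsilon_R)$ to produce a short exact sequence of finitely generated $R$-modules
\[
0 \longrightarrow I \longrightarrow R \longrightarrow T \longrightarrow 0.
\]
Additivity of the $h$-function over short exact sequences (Lemma \ref{lem:2.3}(2)) gives $h_s^R(R)=h_s^R(I)+h_s^R(T)$. Since $h_s^R(I)\geq 0$ (it is the limit of a sequence of nonnegative lengths divided by $p^{ed}$), we obtain $h_s^R(R)\geq h_s^R(T)$. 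Proposition \ref{prop:3.3} applied to the surjection $\epsilon_R\colon R\twoheadrightarrow T$ then identifies $h_s^R(T)=h_s^T(T)$. Because $\dim R=\dim T=d$, the normalizing factor $\mathcal{H}_s(d)$ is the same for both, so dividing yields $e_s(R)\geq e_s(T)$. Therefore $e_s(R\times_T S)=e_s(R)+e_s(S)-e_s(T)\geq e_s(S)\geq 1$.

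The only real obstacle is the comparison $e_s(R)\geq e_s(T)$ in part (1). The key observation is that the Cohen-Macaulay hypothesis on $R$ and $T$ is not what actually drives this inequality — it follows purely from the fact that the surjection $\epsilon_R$ fits into a short exact sequence $0\to I \to R\to T\to 0$ of $R$-modules, after which additivity and nonnegativity of $h_s$ do all the work. The Cohen-Macaulay assumptions are used only to invoke Lemma \ref{lem:2.3}(4) for $S$ (and for $R$ in parts (2) and (3)). One subtle point that must be verified is that the symbol $e_s(T)$ appearing in Corollary \ref{cor:3.5} denotes $e_s^T(T)$ rather than $e_s^R(T)$; this identification is exactly the content of Proposition \ref{prop:3.3}, and is what allows the comparison to go through cleanly.
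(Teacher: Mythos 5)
Your proofs of parts (2) and (3) are exactly the paper's: Corollary \ref{cor:3.5} plus Lemma \ref{lem:2.3}(4) applied to $R$ (and to $S$ in (2)). For part (1), however, you take a genuinely different route. The paper does not compare $e_s(R)$ with $e_s(T)$ at all: it invokes \cite[Proposition 1.7]{AAM} to conclude that under the hypotheses of (1) the fiber product $R\times_T S$ is itself Cohen--Macaulay, and then applies Lemma \ref{lem:2.3}(4) directly to $R\times_T S$. Your argument instead stays inside the $s$-multiplicity formalism: Corollary \ref{cor:3.5} gives $e_s(R\times_T S)=e_s(R)+e_s(S)-e_s(T)$, and you obtain $e_s(R)\geq e_s(T)$ from the exact sequence $0\to\ker\epsilon_R\to R\to T\to 0$ of finitely generated $R$-modules, additivity (Lemma \ref{lem:2.3}(2)), nonnegativity of the $h$-function, and the identification $h_s^R(T)=h_s^T(T)$ of Proposition \ref{prop:3.3}; your observation that $\dim R=\dim T$ is what makes both that identification and the common normalizing factor $\mathcal{H}_s(d)$ legitimate is the right point to flag, since Proposition \ref{prop:3.3} is only safe when the two rings have the same dimension. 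Both arguments are correct. Yours buys a sharper hypothesis: in (1) the Cohen--Macaulayness of $T$ (and in fact of $R$, since symmetrically $e_s(S)\geq e_s(T)$ via $\epsilon_S$) is never used, only one of $R$, $S$ need be Cohen--Macaulay, and the argument avoids the external structural result from \cite{AAM}. The paper's route, on the other hand, yields the additional information that $R\times_T S$ is itself Cohen--Macaulay in case (1), so its bound is just the classical Taylor--Miller inequality applied to the fiber product.
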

\begin{proof}$\mathrm{(1)}$ By the assumption and \cite[Proposition 1.7]{AAM}, the fiber product  $R \times_TS$ is Cohen-Macaulay. Hence, it follows from Lemma \ref{lem:2.3}(4) that $e_{s}(R \times_T S)\geq 1$.

$\mathrm{(2)}$ By the assumption and  Lemma \ref{lem:2.3}(4), we obtain $e_{s}(R )\geq 1 $ and $   e_{s}(S)\geq 1$. Corollary \ref{cor:3.5} then immediately implies that $e_{s}(R \times_T S)\geq 2 $.

$\mathrm{(3)}$ From the assumption and  Lemma \ref{lem:2.3}(4), it follows that $e_{s}(R )\geq 1 $.  An immediate application of Corollary  \ref{cor:3.5}  yields  $e_{s}(R \times_T S)\geq 1 $.
\end{proof}

In the above theorem,  statement $\mathrm{(1)}$ represents the classical result of Taylor and Miller  (see Lemma \ref{lem:2.3}(4)). From  statement $\mathrm{(3)}$ and \cite[Proposition 1.7]{AAM},  it follows that $R \times_T S$ is not  Cohen-Macaulay. Therefore, Theorems   \ref{thm:5.1} and \ref{thm:5.2} significantly expand the class of ideals and rings, respectively,  that satisfy the  inequalities predicted by the Taylor-Miller conjecture.

\begin{prop}\label{prop:5.3}{\it{Let  $(R, \mathfrak{m} )$ be a Cohen-Macaulay  local ring of prime characteristic $p>0$, and let $I$ be  a proper ideal of  $R $ such that $\text{dim}(R/I)<\text{dim}(R)$. Then  $$ e_{s}(R \bowtie I)\geq 1.$$}} \end{prop}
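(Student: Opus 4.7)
The plan is to observe that this proposition is essentially a direct corollary of the machinery already developed in Section 3, in particular Corollary \ref{cor:3.7}, combined with the Cohen-Macaulay case of the Taylor-Miller bound recorded in Lemma \ref{lem:2.3}(4). The key identification is the isomorphism $R \bowtie I \cong R \times_{R/I} R$ recalled just before Corollary \ref{cor:3.6}, which reduces questions about the amalgamated duplication to questions about a fiber product to which the results of Section 3 apply.

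First I would note that the hypothesis $\dim(R/I) < \dim(R)$ together with taking $S = R$ and $T = R/I$ in the fiber-product setup puts us in the case $\dim(R) = \dim(S) > \dim(T)$ of Corollary \ref{cor:3.5}. Applying Corollary \ref{cor:3.7} (which specializes this case to the amalgamated duplication) gives the identity
\[
e_s(R \bowtie I) = 2\, e_s(R).
\]
This step is the bridge between the amalgamated duplication and the $s$-multiplicity of $R$ itself, and it uses the dimension hypothesis in an essential way, since without $\dim(R/I) < \dim(R)$ one would pick up an extra correction term $-e_s(R/I)$.

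Next I would invoke Lemma \ref{lem:2.3}(4): since $R$ is Cohen-Macaulay of positive characteristic, $e_s(R) \geq 1$ for every $s > 0$. Combining this with the displayed identity yields
\[
e_s(R \bowtie I) = 2\, e_s(R) \geq 2 \geq 1,
\]
which is the desired inequality (and in fact a strengthening of it).

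There is no real obstacle here: once the fiber-product formulas of Section 3 are in hand and the classical Cohen-Macaulay estimate of Taylor and Miller is available, the argument is a one-line assembly. The only thing worth being careful about is ensuring that the hypothesis $\dim(R/I) < \dim(R)$ is used correctly to place us in the third branch of Theorem \ref{thm:3.1}/Corollary \ref{cor:3.5}, since this is what produces the clean doubling identity rather than a formula involving $e_s(R/I)$.
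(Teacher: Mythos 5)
Your proposal is correct and is essentially the paper's own proof: the paper also deduces the statement immediately from Corollary \ref{cor:3.7} (the doubling identity $e_s(R\bowtie I)=2e_s(R)$ under the hypothesis $\dim(R/I)<\dim(R)$) together with Lemma \ref{lem:2.3}(4). One trivial slip: taking $S=R$, $T=R/I$ places you in the second branch of Theorem \ref{thm:3.1}/Corollary \ref{cor:3.5} ($\dim(R)=\dim(S)>\dim(T)$), not the third, exactly as you in fact used in the body of your argument.
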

\begin{proof}It follows immediately from Corollary  \ref{cor:3.7} and Lemma \ref{lem:2.3}(4).
\end{proof}

\subsection{The $s$-analogue of the Watanabe-Yoshida conjecture}
\mbox{}\\

We extend the result of Taylor and Miller to the $s$-analogue of the Watanabe-Yoshida conjecture for fiber products, amalgamated duplications, and idealization rings using $s$-multiplicity.

\begin{thm}\label{thm:5.4}{\it{Let  $d\geq 2$ and let $k$ be a field of prime characteristic $p>2$. Suppose   $(R, \mathfrak{m}_R, k)$, $(S, \mathfrak{m}_S, k)$ and $(T, \mathfrak{m}_T, k)$  are  local rings of prime characterstic $p>0$, and assume $R$  is a non-regular
 complete intersection  of dimension  $d$. If $\text{dim}(R)>\text{dim}(S)>\text{dim}(T)$, then for any  $s>0 $, we have  $$ e_{s}(R \times_T S) \geq e_{s}(R_{d}).$$}} \end{thm}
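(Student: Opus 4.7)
The plan is to exploit Corollary \ref{cor:3.5}, which in the strict-inequality regime $\text{dim}(R) > \text{dim}(S) > \text{dim}(T)$ collapses the $s$-multiplicity of the fiber product to that of $R$ alone, and then to invoke the $s$-analogue of the Watanabe-Yoshida conjecture for complete intersections. This is the same two-step philosophy used for Theorem \ref{thm:5.2}(3), but with the sharper lower bound $e_s(R_d)$ in place of $1$.

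Concretely, I would proceed in two steps. First, the dimension hypothesis places us squarely in the third branch of Corollary \ref{cor:3.5}, so that
$$e_s(R \times_T S) = e_s(R).$$
Hence the theorem reduces to the pointwise inequality $e_s(R) \geq e_s(R_d)$ for every $s > 0$. Second, because $R$ is a non-regular complete intersection of dimension $d \geq 2$ over a field of prime characteristic $p > 2$, I would invoke the $s$-analogue of the Watanabe-Yoshida conjecture studied by Taylor and Miller in \cite[Section 3]{MT}, strengthened if necessary by the complete-intersection form of the classical Watanabe-Yoshida conjecture due to Castillo-Rey \cite[Theorems A and C]{R}. This yields $e_s(R) \geq e_s(R_d)$ for all $s > 0$, and combining with the previous display completes the argument.

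The main obstacle I anticipate lies in the second step. The Taylor-Miller $s$-analogue is originally established only for unmixed non-regular rings of dimension at most $3$, whereas our theorem permits arbitrary $d \geq 2$. One must therefore either verify that their Section 3 arguments go through verbatim under the (strictly stronger) complete-intersection hypothesis, or bootstrap from the Hilbert-Kunz endpoint inequality $e_{HK}(R) \geq e_{HK}(R_d)$ proved in \cite{R} back to arbitrary $s > 0$. The natural way to effect this interpolation is to exploit the Lipschitz continuity of $s \mapsto e_s(-)$ together with the explicit normalization $\mathcal{H}_s(d)$, using that at the small-$s$ end $e_s$ matches the Hilbert-Samuel multiplicity (where the classical inequality $e(R) \geq e(R_d) = 2$ is standard for unmixed non-regular rings) and at the large-$s$ end it matches the Hilbert-Kunz multiplicity handled by Castillo-Rey. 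Once this endpoint-plus-continuity argument is spelled out, the rest of the proof is the one-line reduction via Corollary \ref{cor:3.5}.
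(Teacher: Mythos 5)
Your reduction is exactly the paper's: the hypothesis $\dim(R)>\dim(S)>\dim(T)$ puts you in the third branch of Corollary \ref{cor:3.5}, so $e_s(R\times_T S)=e_s(R)$, and the theorem becomes the statement $e_s(R)\geq e_s(R_d)$ for a non-regular complete intersection $R$. The paper handles this second step by a single citation, \cite[Corollary 3.8]{MT}, which treats non-regular complete intersections of arbitrary dimension $d\geq 2$ (with $p>2$); the ``dimension at most $3$'' restriction you worry about applies to the unmixed case in Miller--Taylor's Section 3, not to their complete-intersection result. So the obstacle you anticipate does not actually arise, and your primary plan coincides with the paper's proof.

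One caution about your fallback: the proposed ``endpoint-plus-continuity'' bootstrap is not a valid argument. Knowing $e_s(R)\geq e_s(R_d)$ in the small-$s$ (Hilbert--Samuel) regime and in the large-$s$ (Hilbert--Kunz) regime, together with Lipschitz continuity of both functions in $s$, does not prevent the two functions from crossing at intermediate values of $s$; continuity gives no pointwise comparison away from the endpoints. If \cite[Corollary 3.8]{MT} were unavailable, you would need a genuine uniform-in-$s$ argument (as Miller--Taylor give, adapting Watanabe--Yoshida's complete-intersection computation to the ideals $\mathfrak{m}^{\lceil sp^e\rceil}+\mathfrak{m}^{[p^e]}$), not an interpolation from the two limiting multiplicities.
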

\begin{proof}We know that if $\text{dim}(R)>\text{dim}(S)>\text{dim}(T)$, then $e_{s}(R \times_T S)= e_{s}(R )$ by Corollary \ref{cor:3.5}. Furthermore, by \cite[Corollary 3.8]{MT}, we have
 $e_{s}(R )\geq e_{s}(R_{d})$.
\end{proof}

\begin{prop}\label{prop:5.5}{\it{Let  $d\geq 2$ and let $k$ be  a field of prime characteristic $p>2$. Let  $(R, \mathfrak{m},k)$  be  a non-regular  complete intersection  of dimension  $d$ with  prime characterstic $p>0$. If $I$ is a proper ideal, then for any  $s>0 $, we have  $$ e_{s}(R \bowtie I) \geq e_{s}(R_{d}).$$}} \end{prop}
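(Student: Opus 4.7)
The plan is to reduce the inequality, via the amalgamated duplication formula for $s$-multiplicity, to the Taylor-Miller lower bound for a single non-regular complete intersection. Under the (tacit) hypothesis $\text{dim}(R/I)<\text{dim}(R)$ that renders Corollaries~\ref{cor:3.6} and~\ref{cor:3.7} applicable (the same dimensional condition imposed in Proposition~\ref{prop:5.3}), I would first invoke Corollary~\ref{cor:3.7} to obtain
$$e_{s}(R\bowtie I) \;=\; 2\,e_{s}(R).$$
Equivalently, recognizing $R\bowtie I\cong R\times_{R/I} R$ places us in the middle case of Corollary~\ref{cor:3.5}, so passing to the amalgamated duplication doubles the $s$-multiplicity.

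Next, the hypotheses that $R$ is a non-regular complete intersection of dimension $d\geq 2$ over a field $k$ of characteristic $p>2$ are exactly what is needed to invoke \cite[Corollary 3.8]{MT}, the same input used in the proof of Theorem~\ref{thm:5.4}. This yields
$$e_{s}(R) \;\geq\; e_{s}(R_{d}) \qquad \text{for all } s>0.$$
Since $R_{d}$ is a Cohen-Macaulay hypersurface, Lemma~\ref{lem:2.3}(4) gives $e_{s}(R_{d})\geq 1>0$. Combining the two bounds,
$$e_{s}(R\bowtie I) \;=\; 2\,e_{s}(R) \;\geq\; 2\,e_{s}(R_{d}) \;\geq\; e_{s}(R_{d}),$$
which is the desired inequality. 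In fact the argument gives the stronger bound $e_{s}(R\bowtie I)\geq 2\,e_{s}(R_{d})$.

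I do not anticipate a serious obstacle, since both ingredients are already in place: the amalgamated duplication identity is an immediate consequence of the fiber product formula developed in Section~3, and the one-ring bound is quoted from \cite{MT}. The only delicate point is the dimensional hypothesis $\text{dim}(R/I)<\text{dim}(R)$; if it were dropped and $\text{dim}(R/I)=\text{dim}(R)$, then the first case of Corollary~\ref{cor:3.5} would instead give $e_{s}(R\bowtie I)=2e_{s}(R)-e_{s}(R/I)$, and one would need extra control on $e_{s}(R/I)$ to conclude. I would therefore state this dimensional condition explicitly in the hypotheses of the proposition, matching the convention of Proposition~\ref{prop:5.3}.
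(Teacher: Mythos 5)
Your proposal is correct and takes essentially the same route as the paper, whose entire proof is the one-line combination of Corollary \ref{cor:3.7} (giving $e_{s}(R\bowtie I)=2e_{s}(R)$) with \cite[Corollary 3.8]{MT} (giving $e_{s}(R)\geq e_{s}(R_{d})$). Your observation that the hypothesis $\dim(R/I)<\dim(R)$ is needed for Corollary \ref{cor:3.7} yet is not stated in the proposition is a fair catch; the paper tacitly assumes it.
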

\begin{proof}It follows immediately from Corollary  \ref{cor:3.7} and \cite[Corollary 3.8]{MT}.
\end{proof}

\begin{thm}\label{thm:5.6}{\it{Let  $d\geq 2$ and let $k$ be a field of prime characteristic $p>2$. Let  $(R, \mathfrak{m},k)$  be  a non-regular  complete intersection  of dimension  $d$ with  prime characterstic $p>0$ and  $M $  a finitely generated $R$-module, then for any  $s>0 $, we have  $$ e_{s}(\mathfrak{m} \ltimes \mathfrak{m}R) \geq e_{s}(R_{d}).$$}} \end{thm}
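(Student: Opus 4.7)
The plan is to reduce the theorem to a direct application of the additive decomposition proved in Corollary \ref{cor:4.4} together with the Watanabe--Yoshida type bound for non-regular complete intersections already established by Taylor--Miller. I read the ideal $\mathfrak{m} \ltimes \mathfrak{m}R$ in the statement as $\mathfrak{m} \ltimes \mathfrak{m}M$, which is the ideal $J_{1} = I_{1} \ltimes I_{1}M$ of Theorem \ref{thm:4.2} specialized at $I_{1} = I_{2} = \mathfrak{m}$, consistent with Theorem \ref{thm:5.1} and Corollary \ref{cor:4.4}.

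Under this interpretation, Corollary \ref{cor:4.4} gives the clean decomposition
\[
e_{s}(\mathfrak{m} \ltimes \mathfrak{m}M) \;=\; e_{s}(R) + e_{s}(M).
\]
Next, because $R$ is a non-regular complete intersection of dimension $d \geq 2$ with $p > 2$, [MT, Corollary 3.8] (the same result used in the proof of Theorem \ref{thm:5.4}) yields $e_{s}(R) \geq e_{s}(R_{d})$ for every $s > 0$. To finish it remains only to verify that $e_{s}(M) \geq 0$, which is immediate from the definitions: the quantities $\ell_{R}(M/(\mathfrak{m}^{\lceil sp^{e}\rceil} + \mathfrak{m}^{[p^{e}]})M)/p^{ed}$ are non-negative, so their limit $h_{s}(\mathfrak{m};M)$ is non-negative, while the normalizing factor $\mathcal{H}_{s}(d)$ is strictly positive for $s > 0$ as it equals the $h$-function of a regular local ring of dimension $d$.

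Assembling the three pieces, one concludes
\[
e_{s}(\mathfrak{m} \ltimes \mathfrak{m}M) \;=\; e_{s}(R) + e_{s}(M) \;\geq\; e_{s}(R) \;\geq\; e_{s}(R_{d}),
\]
which is the claim. There is essentially no serious obstacle here: all the substantive work has been done earlier, namely the additivity formula of Section 4 (itself resting on the short exact sequence $0 \to R \to R \ltimes M \to M \to 0$ and the additivity of the $h$-function in Lemma \ref{lem:2.3}(2)) and the Taylor--Miller lower bound for $e_{s}$ of a non-regular complete intersection. The only place one might stumble is in reading the statement correctly, matching $\mathfrak{m} \ltimes \mathfrak{m}M$ to the ideal $J = I \ltimes IM$ introduced in Theorem \ref{thm:4.2}; once this is done, the argument is a two-line consequence of Corollary \ref{cor:4.4}.
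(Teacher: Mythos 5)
Your proposal is correct and follows essentially the same route as the paper: apply Corollary \ref{cor:4.4} to get $e_{s}(\mathfrak{m}\ltimes\mathfrak{m}M)=e_{s}(R)+e_{s}(M)$ and then invoke \cite[Corollary 3.8]{MT} for $e_{s}(R)\geq e_{s}(R_{d})$. Your reading of the ideal as $\mathfrak{m}\ltimes\mathfrak{m}M$ matches the paper's intent (its ``$\mathfrak{m}R$'' is a typo carried over from Corollary \ref{cor:4.4}), and your explicit check that $e_{s}(M)\geq 0$ only makes precise a step the paper leaves implicit.
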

\begin{proof}By Corollary \ref{cor:4.4}, we get $$ e_{s}(\mathfrak{m} \ltimes \mathfrak{m}R)=e_{s}(R ) +e_{s}(M )\geq e_{s}(R ).$$ Furthermore, by \cite[Corollary 3.8]{MT}, we have $e_{s}(R )\geq e_{s}(R_{d})$. By combining these inequalities, we obtain that $ e_{s}(\mathfrak{m} \ltimes \mathfrak{m}R) \geq e_{s}(R_{d})$.
\end{proof}

Theorems   \ref{thm:5.4} and \ref{thm:5.6} significantly expand the class of rings and ideals, respectively, that satisfy the inequalities predicted by the Watanabe-Yoshida conjecture. When $R$ is a complete intersection local ring, the idealization ring $R \ltimes M$ and the fiber product  $R \times_T S$ may fail to be Cohen-Macaulay. Hence, it is unlikely that either ring is regular. Specific examples can be found in \cite[Example 5.9]{VPJ} and  \cite[Example 4.15]{DM}.

\bigskip {\bf Acknowledgement.}
This work was partially supported by the National Natural Science Foundation of China (Grant No. 11261050).

\bigskip

\end{document}